\tikzset{degil/.style={
            decoration={markings,
            mark= at position 0.5 with {
                  \node[transform shape] (tempnode) {$\backslash$};
                  \draw[thick] (tempnode.north east) -- (tempnode.south west);
                  }
              },
              postaction={decorate}
}
}
\newtheorem*{rep@theorem}{\rep@title}
\newcommand{\newreptheorem}[2]{%
\newenvironment{rep#1}[1]{%
 \def\rep@title{#2 \ref{##1}}%
 \begin{rep@theorem}}%
 {\end{rep@theorem}}}
\newtheorem{thm}{Theorem}[section]
\newtheorem{lemma}[thm]{Lemma}
\newtheorem{prop}[thm]{Proposition}
\newtheorem{cor}[thm]{Corollary}
\theoremstyle{definition} 
\newtheorem{remark}[thm]{Remark}
\newcommand\mydef{\mathrel{\overset{\makebox[0pt]{\mbox{\normalfont\tiny def}}}{=}}}
\newcommand{\comp}{\kern0.5ex\vcenter{\hbox{$\scriptstyle\circ$}}\kern0.5ex}
\newcommand{\bb}[1]{\mathbb{#1}}
\newcommand{\cO}{\mathcal O}
\newcommand{\cT}{\mathcal T}
\newcommand{\cH}{\mathcal H}
\newcommand{\cX}{\mathcal X}
\newcommand{\cY}{\mathcal Y}
\newcommand{\isom}{\simeq}
\newcommand{\ideal}[1]{{\mathfrak{#1}}}
\DeclareMathOperator{\id}{id}
\DeclareMathOperator{\PP}{\bb{P}}
\DeclareMathOperator{\ZZ}{\bb{Z}}
\DeclareMathOperator{\FF}{\bb{F}}
\DeclareMathOperator{\LL}{\bb{L}}
\DeclareMathOperator{\len}{len}
\DeclareMathOperator{\Ob}{Ob}
\DeclareMathOperator{\Tan}{T}
\DeclareMathOperator{\Bl}{Bl}
\DeclareMathOperator{\proddef}{prod}
\DeclareMathOperator{\Ann}{Ann}
\DeclareMathOperator{\SL}{SL}
\newcommand{\Ext}{\mathrm{Ext}}
\newcommand{\Spec}{\mathrm{Spec}}
\newcommand{\Cot}{\mathbb{L}}
\newcommand{\Def}{\mathsf{Def}}
\newcommand{\Art}{\mathbf{Art}}
\newcommand{\ra}{\longrightarrow}
\newcommand{\wt}[1]{\widetilde{#1}}  
\newcommand{\ol}[1]{\overline{#1}} 
\begin{document}
\title{Some elementary examples of non-liftable varieties}
\author{Piotr Achinger}
\address{Banach Center, Instytut Matematyczny PAN, Śniadeckich 8, Warsaw, Poland}
\email{pachinger@impan.pl}

\author{Maciej Zdanowicz}
\address{Wydział Matematyki, Informatyki i Mechaniki UW, Banacha 2, Warsaw, Poland}
\email{mez@mimuw.edu.pl}

\begin{abstract}
We present some simple examples of smooth projective varieties in positive characteristic, arising from linear algebra, which do not admit a lifting neither to characteristic zero, nor to the ring of second Witt vectors.  Our first construction is the blow-up of the graph of the Frobenius morphism of a homogeneous space. The second example is a blow-up of $\PP^3$ in a `purely characteristic-$p$' configuration of points and lines. 
\end{abstract}

\maketitle

\section{Introduction}\label{sec:intro}

Various theorems in modern algebraic geometry are proved using characteristic $p$ methods along the following lines. Given a complex algebraic variety $X$, one reduces the variety mod $p$, exploits the Frobenius morphism on the reduction $X_p$, and deduces statements about the original $X$. Similarly, characteristic zero (particularly complex analytic) methods are employed to study varieties in positive characteristic. The main technical obstacle is that, while every characteristic zero variety can be reduced mod $p$, not every variety in positive characteristic arises as the reduction mod $p$ of a variety in characteristic zero. The first example of such a variety was given by Serre \cite{serre}. 

It turns our that for many purposes, one does not need to lift a given variety all the way to characteristic zero, and it suffices to have a lifting modulo $p^2$. For example, Deligne and Illusie \cite{deligne_illusie} showed that for a smooth variety $X$ over a perfect field $k$ of characteristic $p>\dim X$ admitting a lifting to $W_2(k)$ (the ring of Witt vectors of length 2), the Hodge--de Rham spectral sequence degenerates, and the Kodaira vanishing theorem holds. More recently, Langer \cite{langer} showed that the logarithmic Bogomolov--Miyaoka--Yau inequality holds for surfaces liftable to $W_2(k)$ (as long as $p>2$). Counterexamples to Kodaira vanishing in positive characteristic given by Raynaud \cite{raynaud} give the first example of varieties which do not lift to $W_2(k)$. Subsequently, a rational example was given by Lauritzen and Rao \cite{lauritzen_rao}. In the positive direction, it is known that every Frobenius split variety lifts to $W_2(k)$ \cite[Proposition 8.4]{langer_higgs}.

In this paper, we construct new examples of smooth projective varieties that do not admit lifts neither to characteristic zero, nor to $W_2(k)$ (some of them do not even lift to any ring $A$ with $pA\neq 0$).  However it turns out that they avoid standard characteristic $p$ pathologies, in particular they satisfy the following
\begin{description}
\item[Good properties]
\end{description}
\begin{enumerate}
	\item they are smooth, projective, rational, and simply connected,
	\item their classes in the Grothendieck ring of varieties are polynomials in the Lefschetz motive $\mathbb{L}=[\mathbb{A}^1]$ with non-negative integer coefficients, 
  	\item their $\ell$-adic integral cohomology rings are generated by algebraic cycles,
	\item their integral crystalline cohomology groups are torsion-free $F$-crystals,
	\item their Hodge--de Rham and conjugate spectral sequences degenerate, they are ordinary in the sense of Bloch--Kato, and of Hodge--Witt type (cf. \S\ref{sec:ordinarity} for the relevant definitions).
\end{enumerate}
Since our constructions are very simple, we try to aim our exposition at non-experts, and go for elementary arguments whenever possible. 

The first construction is given by the blow-up of the two-fold self product of a suitable projective homogeneous space $\neq\mathbb{P}^n$ along the graph of its Frobenius morphism.  The easiest examples of such homogeneous spaces being the three-dimensional complete flag variety $\SL_3/B$ (isomorphic to the incidence variety $\{x_0y_0 + x_1y_1 + x_2y_2 = 0\} \subset \PP^2 \times \PP^2$) and the three-dimensional smooth quadric hypersurface $Q=\{x_0^2 + x_1 x_2+x_3x_4 = 0\}\subseteq \mathbb{P}^4$ 
, the smallest non-liftable examples given by the construction are six-dimensional with Picard numbers five and three, respectively.  The proof of the above good properties and non-liftability is given in \cref{thm:graph}.


The second construction is the following.  Let $X$ be the variety obtained from $\mathbb{P}^3$ by (1) blowing up all $\mathbb{F}_p$-rational points, and (2) blowing up the strict transforms of all lines connecting $\mathbb{F}_p$-rational points. Then $X$ satisfies (1)--(5) above, but does not admit a lift to any ring $A$ with $pA\neq 0$.  The proofs are presented in \cref{thm:config}.

Both in \cref{thm:graph} and \cref{thm:config}, the proofs of non-liftability use the key observation (cf. \cite{liedtke_satriano}, and \cref{lem:blow-up} below) that if the blow-up of a smooth variety $X$ along a smooth subvariety $Z$ lifts, then both $X$ and $Z$ lift. In \cref{thm:graph}, if $X$ was liftable, the homogeneous space $Y$ would be liftable together with Frobenius, which is known to be impossible by the work of Paranjape--Srivinas \cite{paranjape_srinivas} (for lifts to characteristic zero) and Buch--Thomsen--Lauritzen--Mehta \cite{frobenius_flag_varieties} (for lifts to $W_2(k)$). In \cref{thm:config}, we show that the liftability of $X$ would imply the liftability of the arrangement of all $\mathbb{F}_p$-rational points in $\mathbb{P}^2$ preserving the incidence relations; thus non-liftability is established by means of elementary linear algebra. The properties (1)--(5) in both theorems are established quite easily using standard formulas expressing the cohomology of a blow-up which we recall in \S\ref{sec:cohomology}.

\subsection{Notation} \label{sec:notation}

Throughout $k$ denotes a perfect field of characteristic $p>0$.  For any $k$-scheme $X$ by $X^{(1)}$ we denote the Frobenius pullback $X^{(1)} \mydef X \times_{\Spec(k),F} \Spec(k)$ and by $F_{X/k} : X \to X^{(1)}$ the relative Frobenius of $X$ over $k$.  We say that a scheme $X/k$ admits a $W_2(k)$-lifting if there exists a flat $W_2(k)$-scheme $\wt{X}$ such that $\wt{X} \times_{\Spec(W_2(k))} \Spec(k) \isom X$.  Finally, we say that a scheme $X$ lifts to $W_2(k)$ compatibly with Frobenius if there exists a $W_2(k)$-lifting $\wt{X}$ of $X$ together with a morphism $\wt{F_{X/k}} : \wt{X} \to \wt{X}^{(1)} \mydef \wt{X} \times_{\Spec(W_2(k)),\sigma} \Spec(W_2(k))$ restricting to the relative Frobenius morphism $F_{X/k} : X \to X^{(1)}$.  For schemes defined over the field $\FF_p$ the absolute Frobenius morphism is in fact $\FF_p$-linear and therefore the relative Frobenius morphism can be interpreted as an endomorphism $F_X : X \to X^{(1)} \isom X$.   

By $\Cot_{X/k}$ we denote the cotangent complex of a scheme $X$ over $k$.  Moreover, by $\Def_X$ we mean the deformation functor of $X$, that is, a covariant functor from the category $\Art_{W(k)}(k)$ of Artinian local $W(k)$-algebras with residue field $k$ to the category of sets defined by the formula:
\[
\Art_{W(k)}(k) \ni (A,\ideal{m}_A) \mapsto \Def_X(A) \mydef \left\{ 
\begin{gathered}
\text{ isomorphism classes of flat } \\
\text{ deformations of $X$ over $\Spec(A)$ }
\end{gathered}
\right\}.
\]
Similarly, if $Z = \{Z_i \}_{i\in I}$ is a family of closed subschemes of $X$ indexed by a preorder $I$ (i.e., a set with a reflexive and transitive binary relation), such that $Z_i$ is a closed subscheme of $Z_j$ whenever $i\leq j$ (in other words, $I$ is a small category whose morphism sets have at most one element, and $Z$ is a functor from $I$ to the category of closed subschemes of $X$), we denote by $\Def_{X, Z}$ the functor of flat deformations of $X$ together with compatible embedded deformations of the $Z_i$, preserving the inclusion relations given by the relation $\leq$.
If $f:X\to Y$ is a map of $k$-schemes, we denote by $\Def_f$ the functor of flat deformations of $X$, and $Y$ along with a deformation of $f$.


\medskip

\noindent {\bf Acknowledgements.} We thank Andre Chatzistamatiou, Adrian Langer, Christian Liedtke, Vasudevan Srinivas, Bernd Sturmfels, and Jarosław Wiśniewski for helpful discussions. The first author was supported by NCN OPUS grant number UMO-2015/17/B/ST1/02634. The second author was supported by NCN PRELUDIUM grant number UMO-2014/13/N/ST1/02673. This work was partially supported by the grant 346300 for IMPAN from the Simons Foundation and the matching 2015--2019 Polish MNiSW fund. 

\section{The first construction}

We fix a semisimple algebraic group $G$ over $k=\FF_p$, a reduced parabolic subgroup $P\subseteq G$, and set $Y=G/P$. We assume that either $G$ is of type $A$ and $Y$ is not a projective space, or that $P$ is contained in a maximal parabolic subgroup as listed in \cite[4.3.1--4.3.7]{frobenius_flag_varieties} (these are the cases in which we know that $Y$ does not lift to $W_2(k)=\ZZ/p^2\ZZ$ together with Frobenius). For example, $Y$ could be the Grassmannian ${\rm Gr}(n, k)$ ($1<k<n-1$) or the full flag variety $SL_n/B$ ($n\geq 3$, $B=$ upper-trianular matrices), or a smooth quadric hypersurface in $\PP^n$, $n\geq 4$. Presumably all homogeneous spaces which are not toric (i.e., not a product of projective spaces) do not admit a lift to $W_2(k)$ together with Frobenius. 

\begin{thm}\label{thm:graph}
Let $\Gamma_F \subseteq Y\times Y$ be the graph of the Frobenius morphism $F_Y : Y \to Y$.  Let $X = \Bl_{\Gamma_{F}}(Y \times Y)$ be the blow-up of $Y\times Y$ along $\Gamma_{F}$, and $X' = \Bl_{\Delta_Y}(Y \times Y)$ the blow-up of $Y\times Y$ along the diagonal. Then $X$ and $X'$ share the good properties of \cref{sec:intro} and moreover:
\begin{enumerate}[a)]
	\item they are `\'etale homeomorphic,' i.e., their \'etale sites are equivalent; 
  \item their $\ell$-adic integral cohomology rings are isomorphic as Galois representations
	\item their integral crystalline cohomology groups are isomorphic torsion-free $F$-crystals.
\end{enumerate}
However, $X'$ admits a projective lift to $W(k)$, while $X$ lifts neither to characteristic zero (even formally), nor to $W_2(k)$.
\end{thm}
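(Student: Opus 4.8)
The plan is to break the statement into four parts: (i) the "good properties" of §\ref{sec:intro}, (ii) the comparison statements a)–c) between $X$ and $X'$, (iii) the existence of a lift of $X'$ to $W(k)$, and (iv) the non-liftability of $X$. Parts (i) and (ii) should follow from the general blow-up machinery recalled in §\ref{sec:cohomology}: since $\Gamma_F$ and $\Delta_Y$ are both smooth closed subvarieties of $Y\times Y$ isomorphic to $Y$, with isomorphic normal bundles once one accounts for the Frobenius twist (the normal bundle of $\Gamma_F$ is $F_Y^*T_Y \cong (T_Y)^{(1)}|_Y$ up to the identification $Y^{(1)}\cong Y$ over $\FF_p$, whereas the normal bundle of $\Delta_Y$ is $T_Y$ — but their Chern classes agree after pullback because Frobenius acts as multiplication by $p$ on cohomology in the relevant degrees, or more robustly because the blow-up formulas only see the \emph{ranks} of the bundles and the cohomology of $Y$, not the bundle itself). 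For the étale-homeomorphism claim in a), the key point is that $F_Y$ is a universal homeomorphism, hence $\Gamma_F \hookrightarrow Y\times Y$ and $\Delta_Y \hookrightarrow Y\times Y$ differ by a universal homeomorphism of the ambient space composed with the identity, so the blow-ups are universally homeomorphic; since étale morphisms are insensitive to universal homeomorphisms (topological invariance of the étale site), the étale sites are equivalent, and b), c) then follow because $\ell$-adic and crystalline cohomology factor through the (small) étale/crystalline sites — here one must check that the Galois action and $F$-crystal structure are also matched, which again reduces to the invariance of these cohomologies under universal homeomorphisms (for crystalline cohomology one invokes the comparison with, e.g., the étale-locally-computed de Rham–Witt complex, or cites the relevant invariance result).

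For part (iii), I would write down an explicit lift: choose any flat projective $W(k)$-model $\wt Y$ of $Y$ (a homogeneous space, being defined by equations with $\ZZ$ coefficients, lifts canonically), form $\wt Y \times_{W(k)} \wt Y$, and blow up the diagonal $\Delta_{\wt Y}$, which is a smooth closed subscheme flat over $W(k)$; the blow-up $\Bl_{\Delta_{\wt Y}}(\wt Y\times\wt Y)$ is then flat and projective over $W(k)$ and reduces mod $p$ to $X'$ by compatibility of blow-ups with base change along a Cartier divisor / a regular sequence. This part is essentially formal.

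For part (iv), the non-liftability of $X$, I would invoke \cref{lem:blow-up}: if $X = \Bl_{\Gamma_F}(Y\times Y)$ lifts to a ring $A$ (with $pA\neq 0$, or to $W_2(k)$, or formally to characteristic zero), then the pair $(Y\times Y,\ \Gamma_F)$ lifts over $A$, and in particular $\Gamma_F$ lifts together with its embedding into $Y\times Y$. Now the point is that a lift of the \emph{subvariety} $\Gamma_F \subseteq Y\times Y$ is exactly the data of a lift of $Y$ together with a lift of the morphism $F_Y: Y\to Y$ (reading off $\Gamma_F$ as a graph over the first factor, and using that the second projection $\Gamma_F\to Y$ is the Frobenius): more precisely, a lift of $Y\times Y$ restricts to lifts $\wt Y_1,\wt Y_2$ of the two factors, the lifted subscheme $\wt\Gamma$ is a lift of $\Gamma_F\cong Y$ mapping isomorphically to $\wt Y_1$ and via a lift $\wt F$ of $F_Y$ to $\wt Y_2$, and after identifying $\wt Y_2^{(1)}$ with the Frobenius twist this $\wt F$ is precisely a lift of relative Frobenius. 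Thus a lift of $X$ over $W_2(k)$ produces a lift of $Y$ to $W_2(k)$ compatible with Frobenius, contradicting \cite{frobenius_flag_varieties} in the cases we have restricted to; a formal lift over a complete characteristic-zero DVR would similarly, after spreading out, contradict Paranjape–Srinivas \cite{paranjape_srinivas}. \textbf{The main obstacle} I anticipate is part (iv): one must check carefully that "lift of the embedded subscheme $\Gamma_F$" really is equivalent to "lift of $Y$ with Frobenius," i.e., that the graph description is preserved under deformation (this is where one uses that the first projection $\Gamma_F\to Y$ is an isomorphism, so any flat deformation of $\Gamma_F$ inside a deformation of $Y\times Y$ is still a graph), and one must correctly match $\wt F$ with a lift of \emph{relative} Frobenius over the base (harmless over $\FF_p$ since absolute and relative Frobenius essentially agree, but worth stating). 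A secondary subtlety is making precise, in part a)–c), that the equivalence of étale sites upgrades to an isomorphism of cohomology \emph{rings} respecting all extra structure; this is standard but should be cited rather than reproved.
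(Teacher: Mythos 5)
Your overall strategy matches the paper's, but there are two genuine gaps. The more serious one is in part (iv). After invoking \cref{lem:blow-up} you obtain a lift of the pair $(Y\times Y,\Gamma_F)$, and you then assert that ``a lift of $Y\times Y$ restricts to lifts $\wt Y_1,\wt Y_2$ of the two factors.'' This is not automatic: a flat deformation of a product need not be a product of deformations, and without the product structure there are no projections $\wt p_1,\wt p_2$ with which to read off the lifted subscheme as a graph. The paper supplies exactly this step as \cref{lem:product_deformation}, which shows that $\Def_Y\times\Def_Y\to\Def_{Y\times Y}$ is smooth (in particular levelwise surjective) under the hypothesis $H^1(Y,\cO_Y)=0$, verified for homogeneous spaces via Kempf vanishing in \cref{example:properties_grassmanian}; the whole package is \cref{lem:lifting_graph}. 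A related omission: even granting the splitting, you only obtain two a priori different liftings $\wt Y_1,\wt Y_2$ of $Y$ and a lift $\wt F\colon\wt Y_1\to\wt Y_2$ of Frobenius, whereas the non-liftability results of \cite{frobenius_flag_varieties} concern a lift of Frobenius from a single lifting to its own Frobenius twist. The paper bridges this using rigidity, $H^1(Y,\cT_Y)=0$, to identify $\wt Y_2$ with $\wt Y_1$. Neither cohomological hypothesis appears in your argument, and both are genuinely used.

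The second gap is in part (c). Integral crystalline cohomology is \emph{not} invariant under universal homeomorphisms: Frobenius pullback on $H^{n}_{\mathrm{cris}}$ is injective but not surjective for $n>0$ (it multiplies divisor classes by $p$, for instance), so the pullback along the morphism realizing your universal homeomorphism between $X$ and $X'$ is not an integral isomorphism --- indeed the paper remarks that the crystalline cohomology \emph{algebras} of $X$ and $X'$ are not isomorphic and only become so after inverting $p$. The correct route, which the paper takes, is the blow-up formula of \cref{prop:blow-up-formula}: since $\Gamma_F\cong Y\cong\Delta_Y$, both cohomologies decompose into the same direct sum of $F$-crystals, giving an abstract isomorphism of the groups (not one induced by a map of varieties). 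Your treatment of the good properties, of (a) and (b), and of the lift of $X'$ is essentially the paper's argument.
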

\begin{proof}
Good property (1) follows from Bruhat decomposition and the birational invariance of the \'{e}tale fundamental group of smooth varieties.  Properties (2)--(5) follow from the results of sections \ref{sec:cohomology}--\ref{sec:ordinarity}. Property (a) follows from the existence of the following cartesian diagram:
\begin{displaymath}
\xymatrix{
	X \ar[r]^u \ar[d]_{f} & X' \ar[d]^{f'} \\
	Y \times Y \ar[r]_{\id \times F_Y} & Y \times Y,}
\end{displaymath}
where $f$ and $f'$ are the respective blow-up maps. Indeed, the Frobenius map $F_{X'}:X'\to X'$ and the composition $(F_Y\times \id)\circ f' :X'\to Y\times Y$ yield a map $v:X'\to X$ making the diagram  
\[ 
  \xymatrix{
    X' \ar@/^1.2em/[rr]^{F_{X'}} \ar[r]_v \ar[d]_{f'} & X \ar@/^1.2em/[rr]^{F_{X}} \ar[r]_u \ar[d]_{f} & X' \ar[r]_v \ar[d]_{f'} &   X \ar[d]_{f}  \\
    Y\times Y \ar[r]^{F_Y\times \id} \ar@/_1.2em/[rr]_{F_{Y\times Y}}  & Y\times Y \ar[r]^{\id \times F_Y} \ar@/_1.2em/[rr]_{F_{Y\times Y}} & Y\times Y \ar[r]^{F_Y\times \id}  & Y\times Y
  }
\]
commute. In particular, $v\circ u=F_X$ and $u\circ v = F_{X'}$. Since $F_X$ and $F_{X'}$ are \'etale homeomorphisms by \cite[XIV=XV \S{}1 $n^\circ$ 2, Pr. 2(c)]{SGA5}, $u$ and $v$ are \'etale homeomorphisms as well. Property (b) follows from (a). Finally, property (c) follows from the blow-up formula \S\ref{sec:cohomology}. We remark that the crystalline cohomology algebras $H^*_{\rm cris}(X/W)$ and $H^*_{\rm cris}(X'/W)$ are not isomorphic, but become so after inverting $p$.

We now prove that $X'$ lifts to $W(k)$ projectively and that $X$ does not lift either to $W_2(k)$ or any ramified extension of $W(k)$.  For the first claim, we observe that $Y$ lifts to a projective scheme $\cY$ over $W(k)$ and consequently $\cX' = \Bl_{\Delta_\cY}(\cY \times_{W(k)} \cY)$ is a projective lifting of $X'$.  We now proceed to the second claim.  We begin with a proposition addressing Frobenius liftability of homogeneous spaces and describing their cohomological properties necessary to apply deformation theoretic results stated in \S\ref{sec:deformation}.
  
\begin{prop}\label{example:properties_grassmanian}
Let $Y$ be a homogeneous space over $k$ of a semisimple algebraic group $G$ not isomorphic to any projective space.  Then, $Y$ does not admit a $W_2(k)$--lifting compatible with Frobenius.  Moreover, it satisfies $H^1(Y,\cT_Y) = 0$ and $H^i(Y,\cO_Y) = 0$ for $i > 0$.  
\end{prop}
\begin{proof}
For the part of the proof concerning Frobenius liftability see \cite[Theorem 6]{frobenius_flag_varieties}.  Vanishing of $H^1(Y,\cT_Y)$ follows from \cite[Th\'{e}oreme 2]{demazure}.  Finally, $H^i(Y,\cO_Y) = 0$ is the consequence of Kempf vanishing (i.e., a characteristic $p$ analogue of the Borel--Weil--Bott theorem) as $0$ is a dominant weight for the parabolic subgroup of $G$ corresponding to $Y$.
\end{proof}

We show $X$ does not lift to $W_2(k)$.  Assume the contrary, i.e., that there exists a $W_2(k)$-lifting of $\Bl_{\Gamma_{F_{Y/k}}}(Y \times Y)$.  By \cref{lem:lifting_graph} there exist two liftings $\wt{Y}$ and $\wt{Y}'$ of $Y$ together with a lifting $\wt{F_{G/k}} : \wt{Y} \to \wt{Y}'$ of $F_{Y/k} : Y \to Y$.  However, by the property $H^1(Y,\cT_Y) = 0$ the homogeneous space $Y$ is rigid, which implies that the lifting $\wt{Y}'$ is isomorphic to $\wt{Y}$.  This implies that $Y$ is $W_2(k)$-liftable compatibly with Frobenius, which contradicts \cref{example:properties_grassmanian}. 

Finally, we address characteristic $0$ non-liftability of $X$.  Again, we reason by contradiction.  Any characteristic $0$ lifting of $X$ induces a formal lifting of $X$ which be \cref{lem:lifting_graph} and rigidity of $Y$ gives a formal lifting of a non-trivial endomorphism $F_Y : Y \to Y$.  By the Grothendieck algebraization theorem the formal lifting of the finite morphism $F_Y$ extends to an algebraic lifting which contradicts the final result of \cite{paranjape_srinivas} stating that homogeneous spaces in characteristic $0$ not isomorphic to products of projective spaces admit no non-trivial endomorphisms.
\end{proof}

\section{Second construction}

We work over an algebraically closed field $k$ of characteristic $p$. Let $P=\PP^3(\FF_p) \subseteq \PP^3_k$ be the set of all $\#\PP^3(\FF_p) = 1+p+p^2+p^3$ $\FF_p$-rational points, let $Y=\Bl_P \PP^3_k$, and let $L$ be the set of $\#G(2,4)(\FF_p) = \binom{1+p+p^2+p^3}{2}/\binom{1+p}{2} = 1+p+2p^2+p^3+p^4$ lines in $\PP^3_k$ meeting $P$ at least twice.  Finally, let $\tilde{L}\subseteq Y$ be the set of the strict transforms of all elements of $L$, and let $X=\Bl_{\tilde L} Y$. 

\begin{thm}\label{thm:config}
The threefold $X$ has the good properties from the introduction, but does not admit a lift to any ring $A$ with $pA\neq 0$.
\end{thm}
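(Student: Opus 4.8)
The statement splits into the good properties, which are formal, and the non-liftability, which is the main point. For the good properties, note first that two distinct lines of $L$ meet in at most one point, and such a point --- being the scheme-theoretic intersection of two $\FF_p$-rational lines --- is itself $\FF_p$-rational, hence lies in $P$; so in $Y=\Bl_P\PP^3_k$ the strict transforms $\tilde\ell$ ($\ell\in L$) are pairwise disjoint, each isomorphic to $\PP^1$. Thus $X$ is obtained from $\PP^3_k$ by two successive blow-ups along smooth centres --- a finite reduced set of points, then a disjoint union of lines --- so it is smooth, projective, and rational, and it is simply connected because blowing up a smooth centre of codimension $\ge2$ does not change $\piet$. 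Iterating $[\Bl_ZW]=[W]+([\PP^{c-1}]-1)\,[Z]$ together with $[\PP^n]=1+\mathbb{L}+\dots+\mathbb{L}^n$ gives $[X]=(1+\mathbb{L}+\mathbb{L}^2+\mathbb{L}^3)+(\#P+\#L)(\mathbb{L}+\mathbb{L}^2)\in\ZZ_{\ge0}[\mathbb{L}]$. Properties (3)--(5) then follow by feeding the building blocks $\PP^3$, $\Spec k$, and $\PP^1$ --- for which they are obvious --- into the blow-up formulas of \S\ref{sec:cohomology} and the stability statements of \S\ref{sec:ordinarity}.

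For non-liftability I would argue by contradiction: suppose $X$ has a flat lift over a ring $A$ with $pA\neq0$. A standard reduction lets us assume $A$ is Artinian local, and then $pA\neq0$ amounts to $p\cdot1_A\neq0$, i.e.\ to the nonexistence of a ring homomorphism $\FF_p\to A$; so it suffices to produce one. The plan is to descend the lift through the two blow-ups by \cref{lem:blow-up}: applied to $X=\Bl_{\tilde L}Y$ it yields a lift $\tilde Y$ of $Y$ carrying a compatible embedded lift of $\tilde L$, and applied to $Y=\Bl_P\PP^3$ it yields a lift of $\PP^3$ --- necessarily $\cong\PP^3_A$, since $H^1(\PP^3,\cT_{\PP^3})=0$ --- carrying a compatible embedded lift $\tilde P$ of $P$ and the blow-down $\tilde Y\to\PP^3_A$. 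Pushing the embedded lift of $\tilde L$ forward along $\tilde Y\to\PP^3_A$ gives, for each $\ell\in L$, a flat lift of $\ell$ in $\PP^3_A$, again a line because the Hilbert scheme of lines in $\PP^3$ --- the Grassmannian $G(2,4)$ --- is smooth; a routine infinitesimal computation then shows that the incidences $p\in\ell$ survive. Hence a lift of $X$ produces a flat lift over $A$ of the entire incidence configuration of $\FF_p$-rational points and lines of $\PP^3_k$, i.e.\ an object of the deformation functor $\Def_{\PP^3,\,P\cup L}$ of $\PP^3$ together with all those points and lines ordered by incidence.

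Next I would cut down to a plane. Fix an $\FF_p$-rational plane $H_0\cong\PP^2\subseteq\PP^3$; all its $\FF_p$-points lie in $P$ and all its $\FF_p$-lines in $L$. Given a triangle of $\FF_p$-lines $\ell_1,\ell_2,\ell_3\subseteq H_0$, the lifts $\tilde\ell_1,\tilde\ell_2$ meet over the lift of their common vertex and therefore span a plane $\tilde H\cong\PP^2_A$ in $\PP^3_A$. Any other $\FF_p$-line $\ell\subseteq H_0$ meets at least two of $\ell_1,\ell_2,\ell_3$ at distinct $\FF_p$-points, whose lifts lie in $\tilde H$; since a section of $\cO_{\PP^1_A}(1)$ vanishing at two sections that are distinct modulo $\ideal{m}_A$ vanishes identically, the lift of $\ell$ lies in $\tilde H$, and then so does every lifted point supported in $H_0$. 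This produces, inside $\tilde H\cong\PP^2_A$, a flat lift over $A$ of the full point--line incidence structure of $\PP^2(\FF_p)$, reducing modulo $\ideal{m}_A$ to the tautological one in $\PP^2_k$. At this point the coordinatisation of the Desarguesian plane (von Staudt's ``algebra of throws'') applies: after moving four lifted points in general position to the standard frame by an element of $\mathrm{PGL}_3(A)$ --- legitimate, since the relevant $3\times3$ minors are units modulo $\ideal{m}_A$ --- the incidence constructions computing the sum of two ``coordinates'' are defined over $\ZZ$ and commute with base change wherever the auxiliary lines are distinct, which holds over $A$ as it does modulo $\ideal{m}_A$. Hence $t\mapsto\tilde t$, sending $t\in\FF_p$ to the $A$-coordinate of the corresponding lifted point, satisfies $\tilde0=0$, $\tilde1=1$, and $\widetilde{s+t}=\tilde s+\tilde t$; in particular $p\cdot1_A=\tilde1+\dots+\tilde1=\widetilde{1+\dots+1}=\tilde0=0$ (with $p$ summands), contradicting $pA\neq0$.

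The main obstacle I anticipate is the bookkeeping in the second step: \cref{lem:blow-up} at once provides lifts of $X$, $Y$, $\PP^3$, and of the blow-up centres, but one must verify that the incidence relations $p\in\ell$ descend through both blow-downs --- equivalently, that the natural transformation $\Def_X\to\Def_{\PP^3,\,P\cup L}$ is well defined. A secondary point is to state carefully that the von Staudt constructions commute with base change to the Artinian ring $A$; over fields this is classical, but here it deserves a clean scheme-theoretic formulation.
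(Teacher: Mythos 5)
Your proposal follows essentially the same route as the paper: descend the lift through both blow-ups via \cref{lem:blow-up} to obtain a lift of $(\PP^3_k, P\cup L)$ preserving incidences, cut down to $\PP^2_k$ by showing coplanarity is preserved, and derive $p=0$ in $A$ from the additivity of the lifted $\FF_p$-coordinates (the paper's explicit induction with $P_n=(n:0:1)$, $Q_n=(n+1:1:1)$ in \cref{lemma:matroid} is precisely the von Staudt addition configuration you invoke). The one step you flag as the main obstacle --- that the incidences $x\in\ell$ survive the blow-down --- is exactly what the paper settles with \cref{cor:reg_seq}: the local equations of the exceptional divisor over $x$ and of $\tilde\ell$ form a regular sequence, so their scheme-theoretic intersection deforms flatly and pushes forward to a point contained in both lifted subschemes.
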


For the good properties (1)--(5), we argue exactly as in the previous section. The proof that $X$ does not deform to any algebra $A$ with $pA\neq 0$ consists of the following three propositions.

\begin{prop}
Let $A$ be an object of $\Art_{W(k)}(k)$, and suppose that $X$ lifts to $A$. Then $\PP^3_k$ lifts to $A$ together with all $\FF_p$-rational points and lines, preserving the incidence relations. 
\end{prop}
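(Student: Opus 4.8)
The plan is to reduce the liftability of the configuration to the liftability of the blow-up $X$ by iterating the key observation (Lemma~\ref{lem:blow-up}) that a lift of a blow-up $\Bl_Z W$ of a smooth variety along a smooth center produces lifts of $W$ and of $Z$. First I would unwind the two-step construction of $X$: a lift $\wt{X}$ of $X = \Bl_{\tilde L} Y$ yields a lift $\wt{Y}$ of $Y = \Bl_P \PP^3_k$ together with an embedded lift $\wt{\tilde L} \subseteq \wt{Y}$ of the strict transforms $\tilde L$; then applying the lemma again to $\wt{Y} = \Bl_P \PP^3_k$ yields a lift of $\PP^3_k$ together with an embedded lift of the exceptional configuration, hence of $P$ itself. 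The delicate point is \emph{compatibility}: I need the lift of $\tilde L$ in $\wt{Y}$ to descend to a coherent lift of the lines $L$ in the lifted $\PP^3$, passing through the lifted points $P$, with all incidences intact.

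The key steps, in order: (1) Apply Lemma~\ref{lem:blow-up} to $\wt X$ to obtain $\wt Y$ and an embedded deformation of $\tilde L = \bigsqcup_{\ell \in L} \tilde\ell$ inside $\wt Y$; since the $\tilde\ell$ are disjoint smooth curves this is just a disjoint union of embedded lifts, one $\wt{\tilde\ell}$ per line. (2) Apply Lemma~\ref{lem:blow-up} to $\wt Y = \Bl_P \PP^3_k$ to obtain a lift $\wt{\PP^3}$ of $\PP^3_k$ together with an embedded lift $\wt P = \bigsqcup_{x\in P}\wt x$ of the (disjoint, reduced) point configuration. Here one should note $\wt{\PP^3}$ is automatically a lift of projective space in the strong sense (its Picard group, and the ample generator $\cO(1)$, lift uniquely), so it is $\PP^3_A$. (3) Push forward each curve: the blow-down map $\wt Y \to \wt{\PP^3}$ sends $\wt{\tilde\ell}$ to its image, a closed subscheme $\wt\ell \subseteq \wt{\PP^3_A}$, flat over $A$, lifting the line $\ell$. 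Since being a line (a $\PP^1$ embedded with degree $1$, equivalently a subscheme with Hilbert polynomial $t+1$ and ideal generated in degree $1$) is an open and closed condition in the Hilbert scheme, and $A$ is local Artinian, $\wt\ell$ is again a line in $\PP^3_A$. (4) Track incidences: the exceptional divisor structure forces $\wt{\tilde\ell}$ to meet the exceptional divisor over $\wt x$ whenever $x \in \ell$, and pushing forward this translates into $\wt x \in \wt\ell$; similarly, any two lines in the reduced configuration meeting at a common point $x$ have strict transforms meeting the same exceptional divisor $E_x$, and pushing forward preserves the common intersection point. One also needs the lifted points and lines to be distinct; this follows because they reduce to distinct objects and the source $X$ is separated.

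The main obstacle I expect is step (4), making the incidence bookkeeping rigorous: passing between ``$\wt{\tilde\ell}$ meets $E_x$'' in $\wt Y$ and ``$\wt x \in \wt\ell$'' in $\wt{\PP^3_A}$ requires understanding how the strict transform interacts with the embedded deformation, in particular that the embedded lift $\wt{\tilde\ell}$ really is (\'etale-locally near $E_x$) the strict transform of its image $\wt\ell$, so that flatness of the intersection is preserved. The cleanest way around this is probably to work with the total transforms and the relation $\tilde\ell = \overline{f^{-1}(\ell) \setminus E}$ scheme-theoretically, using that over an Artin base a flat family whose special fibre is a strict transform is again a strict transform because the relevant exceptional divisors deform along with everything else. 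Once that is in place, incidences among lines (two lines through a common point) reduce to the point case, since a common point $x$ of $\ell_1, \ell_2$ shows up as a common intersection $\wt{\tilde\ell_1} \cap E_x = \wt{\tilde\ell_2}\cap E_x$ in $\wt Y$, which is a lift of the point $x$ sitting on both $\wt\ell_1$ and $\wt\ell_2$ after blowing down. All other properties claimed---flatness over $A$, that the lift of $\PP^3$ is $\PP^3_A$, that lines remain lines---are formal consequences of openness/closedness in Hilbert schemes over an Artinian base and can be stated without computation.
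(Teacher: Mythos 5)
Your overall route is the same as the paper's: apply the blow-up descent lemma twice, push the embedded deformations forward along the blow-down maps, and read off the incidences from the exceptional divisors. Steps (1)--(3) match the paper's chain of deformation functors
\[
\Def_X \xleftarrow{\;\sim\;} \Def_{X,F} \to \Def_{Y,\tilde L} \xleftarrow{\;\sim\;} \Def_{Y,\tilde L\cup E} \xleftarrow{\;\sim\;} \Def_{Y,\tilde L\cup E\cup Q} \to \Def_{\PP^3_k, L\cup P},
\]
and you correctly identify step (4), the incidence bookkeeping, as the one genuinely delicate point.

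Where you diverge is in how you propose to close that gap, and your sketch is not quite the right tool. The issue is not whether the embedded lift $\wt{\tilde\ell}$ is ``still a strict transform'' of its image; the issue is whether the scheme-theoretic intersection $\wt{\tilde\ell}\cap \wt{E_x}$ is flat over $A$, i.e.\ whether it is an honest embedded lift of the point $q=\tilde\ell\cap E_x$ sitting inside both lifted subschemes. The paper resolves this by introducing the set $Q=(\bigcup\tilde L)\cap(\bigcup E)$ of these intersection points and showing that the forgetful map $\Def_{Y,\tilde L\cup E\cup Q}\to\Def_{Y,\tilde L\cup E}$ is an isomorphism: since $\tilde\ell$ and $E_x$ meet transversally, their local equations form a regular sequence, and the regular-sequence lemma (\cref{lem:nonzero}, \cref{cor:reg_seq}) shows that a regular sequence in the special fibre lifts to a regular sequence cutting out an $A$-flat subscheme. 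Once $Q$ deforms flatly inside both $\wt{\tilde\ell}$ and $\wt{E_x}$, pushing forward (using $Rf_*\cO_{W}=\cO_{Z}$ for each member of the family, with the incidences encoded as a preorder on the index set) sends $\wt q$ to $\wt x$ and yields $\wt x\subseteq\wt\ell$ directly; no analysis of total versus strict transforms over the Artinian base is needed. So your proof is structurally correct but the missing step should be filled by the transversality/flatness argument rather than by the claim about strict transforms deforming to strict transforms, which as stated is both harder to justify and not what the incidence statement actually requires.
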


\begin{proof}
Let $E$ be the set consisting of the preimages in $Y$ of the elements of $P$, $F$ the set of preimages in $X$ of the elements of $\tilde L$. Finally, let $Q = (\bigcup \tilde L) \cap (\bigcup E)$ (treated as a set of points).  We have the following chain of natural transformations between various deformation functors:
\[
\xymatrix{
	\Def_X & \Def_{X,F} \ar[l]_\isom\ar[r] & \Def_{Y,\tilde{L}}  & \Def_{Y,\tilde{L}\cup E} \ar[l]_\isom  & \Def_{Y,\tilde{L} \cup E \cup Q}  \ar[l]_\isom \ar[r] & \Def_{\PP^3_k,L \cup P} .}
\]
We remind the reader of our convention (cf. \S\ref{sec:notation}) that for a family of closed subschemes $Z=\{Z_i\}_{i\in I}$ of a scheme $X$ indexed by a preorder $I$, $\Def_{X, Z}$ is the functor of deformations of $X$, together with embedded deformations of $Z_i$, preserving the inclusion relations $Z_i\subseteq Z_{i'}$ for $i\leq i'$. Above, we give the families $F, \tilde L, \tilde L\cup E$ the trivial order, and order $\tilde L\cup E\cup Q$ and $L\cup P$ by inclusion. In particular, the functor $ \Def_{Y,\tilde{L} \cup E \cup Q}$ parametrizes deformations of $Y$ together with the strict transforms of the $\FF_p$-rational lines (i.e., $\tilde L$) and the preimages of the $\FF_p$-rational points (i.e., $E$) in $\PP^3_k$ such that their mutual intersections are flat over the base (i.e., induce a compatible embedded deformation of $Q$). Similarly, $\Def_{\PP^3_k, P\cup L}$ is the functor of deformations of $\PP^3_k$ together with all the $\FF_p$-rational points and lines, preserving the incidence relations.  We discuss the maps in this chain below.

The maps $\Def_{X, F}\to \Def_X$, $ \Def_{Y,\tilde{L}\cup E}\to  \Def_{Y,\tilde{L}}$, and $\Def_{Y,\tilde{L}\cup E\cup Q} \to \Def_{Y,\tilde{L} \cup E}$ are the forgetful transformations. 
The first two are isomorphisms by \cref{lem:blow-up}(2), and the last map is an isomorphism by \cref{cor:reg_seq} of \cref{lem:nonzero} applied to the local equations of $E$ and $\wt{L}$.

The maps $\Def_{X,F} \to \Def_{Y,\tilde{L}}$ and $\Def_{Y,\tilde{L} \cup E \cup Q}\to \Def_{\PP^3_k,L \cup P}$ are the maps of \cref{lem:blow-up}(1). For the latter, strictly speaking, \cref{lem:blow-up}(1) yields a map $\Def_{Y,\tilde{L} \cup E \cup Q}\to \Def_{\PP^3_k, Z}$, where $Z=\{Z_s\}_{s\in S}$ is the `image' of $\tilde L\cup E\cup Q$, defined as follows. Let $S=L\sqcup P\sqcup K$ where $K=\{(x, \ell)\in P\times L\,:\, x\in\ell\}$, given the ordering whose nontrivial relations are $(\ell, x)\leq \ell$ and $(\ell , x) \leq x$ for $x\in P, \ell\in L, (x, \ell)\in K$. Then set $Z_\ell = \ell$ for $\ell\in L$, $Z_x = x$ for $x\in P$, and $Z_{(x, \ell)} = x$ for $(x, \ell)\in K$. For an algebra $A$, an element of $\Def_{\PP^3_k, Z}$ is thus given by a deformation of $\PP^3_k$ together with deformations of the $\ell\in L$, $x\in X$, and $Z_{(x, \ell)}=x$ for $(x, \ell)\in K$, preserving the relations $\tilde Z_{(x, \ell)} \subseteq \tilde x$ and $\tilde Z_{(x, \ell)}\subseteq \tilde \ell$ for $(x, \ell)\in K$ (here the tildes mean the corresponding deformations over $A$). But each $x$ is a point, so $\tilde Z_{(x, \ell)}\subseteq \tilde x$ implies $\tilde Z_{(x, \ell)}= \tilde x$, and the deformation of $(\PP^3_k, Z)$ simplifies to a deformation of $(\PP^3_k, L\cup P)$ preserving the incidence relations. Thus $\Def_{\PP^3_k, Z}$ can be identified with $\Def_{\PP^3_k, L\cup P}$.
\end{proof}

\begin{remark}
Since we will have to deal with a little bit of elementary projective geometry and matroid representability over arbitrary rings, let us fix some conventions. Let $A$ be a local ring with residue field $k$. A projective $n$-space $\PP$ over $A$ is an $A$-scheme isomorphic to $\PP^n_A$, and a $d$-dimensional linear subspace $L$ of $\PP$ is a closed subscheme of $\PP$ which is flat over $A$ and such that $L\otimes k$ is a linear subspace of $\PP\otimes k$. Zero-dimensional linear subspaces of $\PP$ can be identified with the set $\PP(A)$. If $x, y\in \PP(A)$ are points whose images in $\PP(k)$ are distinct, there exists a unique line (i.e., a one-dimensional linear subspace) $\ell(x, y)$ containing both $x$ and $y$. We say that points $x,y,z$ are collinear (resp. coplanar) if they lie on one line (resp. $2$-dimensional subspace).   If $x_0, \ldots, x_n, z$ are points whose images in $\PP(k)$ are in general position, there exists a unique isomorphism $\phi:\PP\to \PP^n_A$ such that $\phi(x_i) = e_i:=(0:\ldots:0:1:0:\ldots:0)$ (with $1$ on the $i$-th coordinate) and $\phi(z) = f:=(1:\ldots:1)$. In particular, if $A\in \Art_{W(k)}(k)$, and $S$ is a configuration of linear subspaces of $\PP^n_k$ ordered by inclusion, containing the points $e'_i=(0:\ldots:0:1:0:\ldots:0)$ and $f'=(1:\ldots 1)$, we can identify the deformation functor $\Def_{\PP^n_k, S}(A)$ with the set of all families of linear subspaces $\tilde S$ in $\PP^n_A$ which yield the given $S$ upon restriction to $k$, and such that $\tilde e'_i = e_i$ and $\tilde f'= f$.
\end{remark}

\begin{prop}
Suppose that $\PP^3_k$ lifts to an Artinian $W(k)$-algebra $A$ together with all $\FF_p$-rational points, preserving collinearity. Then the same holds for $\PP^2_k$.
\end{prop}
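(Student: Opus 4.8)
The plan is to realize $\PP^2_k$ as a fixed $\FF_p$-rational hyperplane $H \subseteq \PP^3_k$ and to show that the hypothesised lift of $\PP^3_k$ restricts to a lift of $H$ which automatically carries all of $H$'s $\FF_p$-rational points and preserves collinearity among them.

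First I would normalize. The coordinate points $e_0, e_1, e_2, e_3$ together with $f = (1:1:1:1)$ are $\FF_p$-rational and in general position, so the last sentence of the preceding Remark lets me assume that the lift of $\PP^3_k$ is $\PP^3_A$ itself, with these five points lifting to $e_0, \dots, e_3, f$; write $\wt{x} \in \PP^3(A)$ for the given lift of an $\FF_p$-rational point $x$. Take $H = \{x_3 = 0\} \cong \PP^2_k$ and $\wt{H} = \{x_3 = 0\} \cong \PP^2_A$, the linear subspace of $\PP^3_A$ spanned by $e_0, e_1, e_2$; then $\wt{H}$ is a lift of $H$ and $H(\FF_p) = \PP^2(\FF_p)$. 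Everything will follow from the claim that $\wt{x} \in \wt{H}(A)$ for every $x \in H(\FF_p)$, so this is the heart of the argument.

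To prove the claim I would invoke repeatedly the elementary fact (immediate from the Remark) that a line of $\PP^3_A$ through two $A$-points of a linear $\PP^2_A \subseteq \PP^3_A$ whose reductions mod $k$ are distinct is itself contained in that $\PP^2_A$. Then I argue in two steps. Step one: if $x \in \ell(e_1, e_2)$, then---the cases $x = e_1, e_2$ being covered by the normalization---the triple $e_1, e_2, x$ is collinear in $\PP^3_k$, hence $e_1, e_2, \wt{x}$ lie on a common line of $\PP^3_A$ by hypothesis; as $e_1, e_2 \in \wt{H}$ reduce to distinct points of $k$, that line lies in $\wt{H}$, so $\wt{x} \in \wt{H}$. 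Step two: for $x \in H(\FF_p)$ with $x \neq e_0$ and $x \notin \ell(e_1, e_2)$, set $y = \ell(e_0, x) \cap \ell(e_1, e_2)$, an $\FF_p$-rational point of $H$ lying on $\ell(e_1, e_2)$ and distinct from $e_0$; by step one $\wt{y} \in \wt{H}$, and since $e_0, x, y$ are collinear in $\PP^3_k$, the points $e_0, \wt{x}, \wt{y}$ lie on a common line of $\PP^3_A$; that line passes through $e_0$ and $\wt{y}$, both in $\wt{H}$ and distinct mod $k$, hence lies in $\wt{H}$, so $\wt{x} \in \wt{H}$. Finally $x = e_0$ is in $\wt{H}$ by the normalization, so the claim holds for all $x$.

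It remains to check that $\wt{H}$ together with the points $\wt{x}$, $x \in \PP^2(\FF_p)$, preserves collinearity. If $x, y, z \in \PP^2(\FF_p)$ are distinct and collinear in $\PP^2_k$---equivalently, in $\PP^3_k$---then $\wt{x}, \wt{y}, \wt{z}$ lie on a common line of $\PP^3_A$ by hypothesis; since $\wt{x}, \wt{y} \in \wt{H}$ reduce to distinct points, that line is contained in $\wt{H}$, and the three lifts are collinear inside $\wt{H} \cong \PP^2_A$. Hence $\wt{H}$ with the $\wt{x}$ is the desired lift of $\PP^2_k$ together with all its $\FF_p$-rational points, preserving collinearity. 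I do not expect any serious obstacle here: the essential content is the claim that every $\FF_p$-point of $H$ lifts into $\wt{H}$, which the two-step line-chase above settles, and the only thing requiring mild care is the bookkeeping of the degenerate subcases (a point equal to a coordinate point, or lying on $\ell(e_1, e_2)$) and the repeated, routine use of the ``line through two points of a plane stays in the plane'' fact over the Artinian base $A$.
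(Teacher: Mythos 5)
Your proof is correct and follows essentially the same route as the paper's: the paper first shows that coplanarity of $\FF_p$-rational points is preserved (via the auxiliary intersection point $y=\ell(x_1,x_2)\cap\ell(x_3,x_4)$ of two $\FF_p$-rational lines) and then restricts to a coordinate plane, while you specialize the same line-intersection trick to the fixed plane $\{x_3=0\}$ and the reference points $e_0,e_1,e_2$. The difference is purely organizational; the key geometric step --- forcing a lifted point into the lifted plane by running an $\FF_p$-line through it and two points already known to lie there --- is identical.
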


\begin{proof}
The key observation is that coplanarity is also preserved, i.e., that $\Def_{\PP^3_k, L\cup P}=\Def_{\PP^3_k, H\cup L\cup P}$, where $H$ denotes the set of all $\FF_p$-rational hyperplanes in $\PP^3_k$ (with $H\cup L\cup P$ ordered by inclusion). Indeed, let $A$ be an object of $\Art_{W(k)}(k)$, and suppose we are given an element of $\Def_{\PP^3_k, L\cup P}(A)$, which by simple rigidification (e.g., the requirement that the points $(1:0:0)$, $(0:1:0)$, $(0:0:1)$, and $(1:1:1)$ do not deform) can be identified with a configuration of points $\tilde x$ and lines $\tilde \ell$ in $\PP^3_A$, indexed by $P$ and $L$ respectively, such that $\tilde x\subseteq \tilde \ell$ whenever $x\in \ell$. To get an element of $\Def_{\PP^3_k, H\cup L\cup P}$, it suffices to show that whenever $x_1, x_2, x_3, x_4\in P$ is a quadruple of coplanar points, the points $\tilde x_1, \tilde x_2, \tilde x_3, \tilde x_4 \in \PP^3(A)$ are coplanar. If two of the points $x_i$ coincide, there is nothing to show, and similarly if all four lie on a line. Otherwise, let $\ell_{12}= \ell(x_1, x_2)$ and $\ell_{34} = \ell(x_3, x_4)$, then $\tilde \ell_{12} = \ell(\tilde x_1, \tilde x_2)$ and $\tilde \ell_{34} = \ell(\tilde x_3, \tilde x_4)$. Since the $x_i$ are coplanar, the lines $\ell_{12}$ and $\ell_{34}$ intersect in a unique point $y\in P$. Then $\tilde y\in \tilde \ell_{12}\cap \tilde \ell_{34} = \ell(\tilde x_1, \tilde x_2) \cap  \ell(\tilde x_3, \tilde x_4)$. Thus the hyperplane through $\tilde y, \tilde x_1, \tilde x_2$ yields a lift of the hyperplane through $x_1, x_2, x_3, x_4$.

Since coplanarity is preserved, we can forget everything except for the plane $x_0=0$ (say) and get a desired lifting of $\PP^2_k$. Equivalently, we could have used a projection from one of the $\FF_p$-rational points.
\end{proof}



To finish, we prove that the matroid $\mathbb{P}^2(\mathbb{F}_p)$ does not admit a projective representation over any ring $A$ with $pA\neq 0$. For $A$ a field, this is well-known (cf. e.g. \cite[\S 2]{gordon}), but we need to make sure that the proof works for arbitrary rings.

\begin{prop} \label{lemma:matroid}
Let $A$ be a ring, $\rho:\PP^2(\FF_p)\to \PP^2(A)$ a map taking triples of collinear points to triples of collinear points. Then $pA=0$.
\end{prop}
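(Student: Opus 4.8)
The strategy is to pin down a small explicit sub-configuration of $\PP^2(\FF_p)$ whose combinatorics force a contradiction unless $p=0$ in $A$. The natural choice is the full configuration coming from the affine plane $\AA^2(\FF_p)$ together with the line at infinity: after rigidifying, we may assume (as in the preceding remark) that $\rho$ sends $(1:0:0), (0:1:0), (0:0:1), (1:1:1)$ to the corresponding standard points of $\PP^2(A)$, so in particular $\rho$ is injective on these and on any triple it keeps non-collinear. First I would set up affine coordinates: declare the line $x_0=0$ to be "at infinity", so the $p^2$ points of $\AA^2(\FF_p)$ map to points $(1:u:v)$ with $u,v\in A$ lifting their residues, and the $p+1$ points at infinity map to points on the lifted line $\ell_\infty = \rho(\{x_0=0\})$.

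**Reconstructing the ring structure.** The key point is that collinearity alone lets one reconstruct addition and multiplication of $\FF_p$ inside $A$ via the classical von Staudt constructions. Concretely: fix the images of $0=(1:0:0)_{\mathrm{aff}}$, $1$, and the point at infinity in the "$x$-direction"; then for $a,b\in\FF_p$ the point $\rho(a+b)$ is obtained from $\rho(a),\rho(b),\rho(0),\rho(1)$ and two auxiliary points at infinity by a sequence of "take the line through two points, intersect with a third line" operations, each of which is well-defined over $A$ provided the relevant pairs of points have distinct images mod $\ideal{m}_A$ — which they do, since the corresponding configuration over $\FF_p$ is in the expected position. Because collinearity is preserved by $\rho$, the point produced by running this construction on the $\rho$-images must be collinear with / equal to the forced points in exactly the pattern that identifies it as $\rho(a+b)$; uniqueness of the intersection point (again valid over the local ring $A$ by the Remark) then gives that the affine coordinate of $\rho(a+b)$ equals (coordinate of $\rho(a)$) $+$ (coordinate of $\rho(b)$) in $A$. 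The same for multiplication. Hence the map $\FF_p\to A$, $a\mapsto(\text{affine coordinate of }\rho(a))$, is a (unital) ring homomorphism, so in particular $p\cdot 1 = \rho(\text{coordinate of }0) + \cdots$ collapses, giving $p=0$ in $A$.

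**Execution details.** I would carry this out in the following order: (i) rigidify and choose affine coordinates, reducing to the statement that the induced map on affine coordinates of the line $\AA^1(\FF_p)\hookrightarrow \PP^2(\FF_p)$ is additive and multiplicative; (ii) recall the two von Staudt diagrams for $+$ and $\times$ and check that every "line through two points" and every "intersection of two lines" appearing in them involves points/lines that are in general position \emph{mod} $\ideal{m}_A$, so that the Remark guarantees these operations are defined and their outputs unique over $A$; (iii) conclude that $a\mapsto \text{coord}(\rho(a))$ is a ring homomorphism $\FF_p\to A$, whence $p=0$ in $A$; (iv) finally pass from the Artinian-local case to an arbitrary ring $A$ by base-changing to $A_{\ideal{m}}/\ideal{m}^2$ — wait, more simply: an arbitrary ring $A$ with $pA\neq 0$ has some prime $\ideal{p}$ with $p\notin\ideal{p}$, hence a local ring $A_\ideal{p}$ in which $p$ is a unit, and then the composed representation $\PP^2(\FF_p)\to\PP^2(A)\to\PP^2(A_\ideal{p})$ (the residue field of $A_\ideal{p}$ has characteristic $\neq p$, but we only need $p$ invertible, so we should instead localize at a prime \emph{containing} a non-annihilated element — concretely pick $\ideal{p}$ not containing the ideal $\Ann(p)$ is awkward; the clean route is: if $pA\neq 0$ then $A\to A/\ideal{q}$ for a suitable prime $\ideal{q}$ with $p\notin \ideal{q}$ still has $\bar p\neq 0$, reduce to $A$ a domain, then to its fraction field, where it is the classical fact).

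**Main obstacle.** The routine linear algebra (that von Staudt's constructions reproduce $+$ and $\times$) is standard over a field; the only genuine work is step (ii) — verifying that each construction step stays in the locus where the Remark applies, i.e.\ that no two points that must be joined by a line, and no two lines that must be intersected, degenerate \emph{modulo} $\ideal{m}_A$. This is a finite check on the $\FF_p$-configuration (it is exactly the statement that the von Staudt diagrams are non-degenerate for the field $\FF_p$, $p$ prime), but it must be done with some care, since over a non-reduced base "distinct mod $\ideal{m}$" is the precise hypothesis needed for existence and uniqueness of the joining line and the intersection point. A secondary, purely bookkeeping, obstacle is the reduction from an arbitrary ring $A$ to the local Artinian (ultimately field) case, which should be dispatched by replacing $A$ with $A/\ideal{q}$ for a prime $\ideal{q}$ not containing a chosen nonzero element of $pA$, then with the fraction field of the resulting domain.
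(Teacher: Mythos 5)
Your steps (i)--(iii) are essentially the paper's argument: after rigidifying the frame $(1{:}0{:}0),(0{:}1{:}0),(0{:}0{:}1),(1{:}1{:}1)$, the paper runs exactly the additive half of the von Staudt construction, showing by induction (via two join-and-intersect operations per step, using the auxiliary points $(0{:}1{:}0)$, $(1{:}0{:}0)$, $(1{:}1{:}0)$ at infinity) that $\rho(n{:}0{:}1)=(n{:}0{:}1)$ and $\rho(n{+}1{:}1{:}1)=(n{+}1{:}1{:}1)$ for all $n\geq 0$, whence $(p{:}0{:}1)=\rho(p{:}0{:}1)=\rho(0{:}0{:}1)=(0{:}0{:}1)$ and $p=0$ in $A$. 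You propose the full von Staudt reconstruction of both $+$ and $\times$; that is correct but more than is needed (the multiplicative construction plays no role, and each extra diagram costs you another non-degeneracy check mod $\ideal{m}_A$). Your identification of the real work --- checking that every join of two points and every meet of two lines in the diagram is non-degenerate modulo $\ideal{m}_A$, so that the Remark's existence/uniqueness applies --- is exactly the content of the paper's induction.

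However, your step (iv) contains a genuine error that you must simply delete rather than repair. You propose to reduce the general case to a domain, and ultimately a field, by passing to $A/\ideal{q}$ for a prime $\ideal{q}$ not containing some chosen nonzero element of $pA$. No such prime exists when that element is nilpotent: every prime ideal contains all nilpotents. The case $A=W_2(\FF_p)=\ZZ/p^2$, where $p$ is a nonzero nilpotent, is precisely the case the proposition is designed to rule out, and there every nonzero element of $pA$ dies in every residue domain. Reducing to fields would therefore prove nothing about $W_2(k)$-liftability. The correct move is the one you already make in (i)--(iii): run the incidence argument directly over the (local) ring $A$, where the conclusion $(p{:}0{:}1)=(0{:}0{:}1)$ in $\PP^2(A)$ forces $p=\lambda\cdot 0=0$ for a unit $\lambda$. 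No reduction to the field case is needed or possible.
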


\begin{proof}
Changing coordinates in $\PP^2(A)$, we can assume that
\[\rho(1:0:0)=(1:0:0), \quad \rho(0:1:0)=(0:1:0),  \quad \rho(0:0:1)=(0:0:1), \] 
and $\rho(1:1:1) = (1:1:1)$. Thus 
\[ \rho(1:1:0)=\rho\left(\ell((0:0:1),(1:1:1))\cap \ell((1:0:0), (0:1:0))\right)= (1:1:0) \]
as well. For $n\in\ZZ$, let $P_n = (n:0:1)$, $Q_n=(n+1:1:1)\in \PP^2(\FF_p)$, and let $P'_n, Q'_n\in \PP^2(A)$ be the points with the same coordinates as $P_n, Q_n$.   We check by induction on $n\geq 0$ that $\rho(P_n) = P'_n$ and $\rho(Q_n) = Q'_n$: the base case is ok, and for the induction step we note that $P_n = \ell(Q_{n-1}, (0:1:0))\cap \ell(P_0, (1:0:0))$, $Q_n = \ell(P_n, (1:1:0))\cap \ell(Q_0, (1:0:0))$, and that the same statements hold with the primes (see Figure~\ref{figure:matroid}). Thus $(p:0:1) = \rho(p:0:1) = \rho(0:0:1) = (0:0:1)$, and hence $p=0$ in $A$. 
\begin{figure}[ht]
  \label{figure:matroid}
  \centering
\begin{tikzpicture}
 [
    scale=1,
    >=stealth,
    point/.style = {draw, circle,  fill = black, inner sep = 1pt},
    dot/.style   = {draw, circle,  fill = black, inner sep = .2pt},
  ]

\coordinate[point,label=below:$P_{p-1}$] (Pp) at (-1,0);
\coordinate[point,label=below:$P_0$] (P0) at (0,0);
\coordinate[point,label=below:$P_1$] (P1) at (1,0);
\coordinate[point,label=below:$(1:0:0)$] (R) at (7,0);
\coordinate (R1) at (5, 0);
\coordinate (R2) at (5, 1);

\coordinate[point,label=above:$Q_{p-1}$] (Q0) at (0,1);
\coordinate[point,label=above:$Q_0$] (Q1) at (1,1);
\coordinate[point,label=above:$Q_1$] (Q2) at (2,1);


\coordinate (S0) at (0,5);
\coordinate (S1) at (1,5);
\coordinate[point,label=above:$(0:1:0)$] (S) at (0,7);

\coordinate (T0) at (3,3);
\coordinate (T1) at (3.5,2.5);
\coordinate (Tp) at (2.5,3.5);
\coordinate[label={\rm line at infinity}] (I) at (7, 2.5);
\coordinate[point,label=right:$(1:1:0)$] (T) at (5,5);

\draw (P0)--(P1)--(R1) (Q0)--(Q1)--(Q2)--(R2); 
\draw (Pp)--(P0)--(Q1)--(T0) (P1)--(Q2)--(T1) (P0)--(Q0)--(S0) (P1)--(Q1)--(S1) (Pp)--(Q0)--(Tp); 
\draw[dotted] (R1)--(R) (R2)--(R) (T0)--(T) (T1)--(T) (S0)--(S) (S1)--(S) (Tp)--(T);

\draw[dashed] (R) arc (0:90:7);
\end{tikzpicture}
  \caption{\emph{Proof of \cref{lemma:matroid}}}

\end{figure}
\end{proof}

\begin{remark}
Note that the proof exhibits a sub-matroid (denoted $M_p$ in \cite{gordon}) consisting of $2p+3$ points sharing the desired property of $\PP^2(\FF_p)$. This means that in our second non-liftable example we could have blown up a smaller configuration of $2p+4$ points and (strict transforms of) $4p+7$ lines between them. 
\end{remark}

\begin{remark}
With the same proof, one can construct similar examples in higher dimensions: blow up $\PP^n_k$ ($n\geq 3$) in all $\FF_p$-rational points, (strict transforms of) lines, planes, and so on.  Such varieties were studied in \cite[Definition~1.2]{remy_thuillier_werner} in relation to automorphisms of the Drinfeld half-space. 
\end{remark}

\begin{remark}
We also remark that in \cite[Proposition~8.4]{langer} it is proved that a pair $(X,D)$ where $X$ is the blow-up of $\PP^2_k$ in all $\FF_p$-rational points and $D$ is a union of strict transforms of at least $4p-3$ $\FF_p$-rational lines does not lift to $W_2(k)$.  The argument above proves that the matroid $M_p$ leads to a non-liftable example with a fewer number of lines equal to $2p+3$.  We do not know whether $2p+3$ is the minimal number of lines necessary to exhibit $W_2(k)$ non-liftability.
\end{remark}



\section{Technical background}

Here we review the necessary technical results regarding deformation theory of products (\S\ref{sec:deformation}), descending deformations along morphisms (\S\ref{sec:descend}), cohomology of blowing up (\S\ref{sec:cohomology}), and Hodge--de Rham degeneration, ordinarity, and the Hodge--Witt property of blow-ups (\S\ref{sec:ordinarity}).

\subsection{Deformations of products}\label{sec:deformation}

Our goal is to show that given two $k$-schemes $X$ and $Y$ such that $H^1(X, \cO_X) = H^1(Y, \cO_Y) = 0$, then every deformation of $X\times Y$ comes from a pair of deformations of $X$ and $Y$ (\cref{lem:product_deformation}). We begin with a few remarks concerning deformation obstruction classes.  Firstly, observe that by \cite{illusie_cotangent} we know that for any $k$-scheme $Z$ the obstruction class to lifting an element 
\[
\left(\wt{f} : \wt{Z} \to \Spec(A)\right) \in \Def_Z(A)
\]
to a thickening $\eta : 0 \ra I \ra (B,\ideal{m}_B) \ra (A,\ideal{m}_A) \ra 0$, satisfying $\ideal{m}_B I = 0$, is given by a class in $\Ext^2(\Cot_{Z/k},\cO_Z) \otimes_k I$ defined as the Yoneda composition of Kodaira--Spencer class $K_{\wt{Z}/A/\ZZ} \in \Ext^1(\Cot_{\wt{Z}/A},L\wt{f}^*\Cot_{A/\ZZ}[1])$ and the pullback $\wt{f}^*\eta$ of the extension class $\eta \in \Ext^1(\Cot_{A/\ZZ},I)$.  Moreover, by a simple diagram chase based on the properties of the cotangent complex we obtain:

\begin{lemma}[Additivity of Kodaira--Spencer]
Let $f : X \to Z$ and $g : Y \to Z$ be morphisms of $S$-schemes.  Let $p_X : X \times_Z Y \to X$ and $p_Y : X \times_Z Y \to Y$ denote the projections, $h : X \times_Z Y \to Z$ the composition $h = f \comp p_X = g \comp p_Y$. Then, Kodaira--Spencer class: 
\[
K_{{X \times_Z Y}/Z/S} \in \Ext^1(\Cot_{X \times_Z Y/Z},Lh^*\Cot_{Z/S})
\] 
equals the direct sum of pullbacks of Kodaira--Spencer classes: 
\begin{align*}
K_{X/Z/S} \in \Ext^1(\Cot_{X/Z},Lf^*\Cot_{Z/S}) \textrm{ and }
K_{Y/Z/S} \in \Ext^1(\Cot_{Y/Z},Lg^*\Cot_{Z/S}).
\end{align*}
\end{lemma}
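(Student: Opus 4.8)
The plan is to derive the identity from functoriality of the transitivity triangle of the cotangent complex. Recall that $K_{X/Z/S}$ is, by construction, the connecting morphism of the transitivity triangle of $X\to Z\to S$,
\[
Lf^*\Cot_{Z/S}\ra\Cot_{X/S}\ra\Cot_{X/Z}\xrightarrow{\,K_{X/Z/S}\,}Lf^*\Cot_{Z/S}[1],
\]
and similarly for $K_{Y/Z/S}$ and $K_{W/Z/S}$, where $W=X\times_Z Y$. Write $\iota_X\colon Lp_X^*\Cot_{X/Z}\to\Cot_{W/Z}$ and $\iota_Y\colon Lp_Y^*\Cot_{Y/Z}\to\Cot_{W/Z}$ for the canonical maps, i.e.\ the first arrows of the transitivity triangles of $W\to X\to Z$ and $W\to Y\to Z$. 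Since $Lp_X^*Lf^*\isom Lh^*\isom Lp_Y^*Lg^*$, the ``direct sum of pullbacks of $K_{X/Z/S}$ and $K_{Y/Z/S}$'' means the morphism $Lp_X^*K_{X/Z/S}\oplus Lp_Y^*K_{Y/Z/S}$ out of $Lp_X^*\Cot_{X/Z}\oplus Lp_Y^*\Cot_{Y/Z}$, and the plan is to prove the identity first in the form
\[
K_{W/Z/S}\comp\iota_X=Lp_X^*K_{X/Z/S},\qquad K_{W/Z/S}\comp\iota_Y=Lp_Y^*K_{Y/Z/S},
\]
and then transport it through the decomposition $\Cot_{W/Z}\isom Lp_X^*\Cot_{X/Z}\oplus Lp_Y^*\Cot_{Y/Z}$.

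For the first of the two identities I would apply functoriality of the transitivity triangle to the $S$-morphism $p_X\colon W\to X$ (which covers the identity on both $Z$ and $S$). This gives a morphism of distinguished triangles, from the triangle of $X\to Z\to S$ pulled back along $Lp_X^*$ to the triangle of $W\to Z\to S$,
\[
\xymatrix{
Lh^*\Cot_{Z/S}\ar@{=}[d]\ar[r] & Lp_X^*\Cot_{X/S}\ar[d]\ar[r] & Lp_X^*\Cot_{X/Z}\ar[d]^{\iota_X}\ar[r]^-{Lp_X^*K_{X/Z/S}} & Lh^*\Cot_{Z/S}[1]\ar@{=}[d]\\
Lh^*\Cot_{Z/S}\ar[r] & \Cot_{W/S}\ar[r] & \Cot_{W/Z}\ar[r]_-{K_{W/Z/S}} & Lh^*\Cot_{Z/S}[1]
}
\]
in which the inner verticals are the canonical functoriality maps and the outer verticals are identities (after the identification $Lp_X^*Lf^*\isom Lh^*$). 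Commutativity of the rightmost square is precisely $K_{W/Z/S}\comp\iota_X=Lp_X^*K_{X/Z/S}$; the argument for $p_Y$ is symmetric. This is the ``simple diagram chase'' of the statement: it uses only that transitivity triangles are functorial for morphisms of towers fixing the base, together with $Lp_X^*Lf^*=L(f\comp p_X)^*=Lh^*$.

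It remains to identify $(\iota_X,\iota_Y)$ as an isomorphism $Lp_X^*\Cot_{X/Z}\oplus Lp_Y^*\Cot_{Y/Z}\isom\Cot_{W/Z}$; granting this, the two identities above are exactly the assertion $K_{W/Z/S}=Lp_X^*K_{X/Z/S}\oplus Lp_Y^*K_{Y/Z/S}$. I would get the isomorphism by splitting the transitivity triangle $Lp_X^*\Cot_{X/Z}\xrightarrow{\iota_X}\Cot_{W/Z}\to\Cot_{W/X}\to Lp_X^*\Cot_{X/Z}[1]$ of $W\to X\to Z$: since $W\to X$ is the base change of $Y\to Z$ along $f$, base change for the cotangent complex gives $\Cot_{W/X}\isom Lp_Y^*\Cot_{Y/Z}$ (legitimate because the factors are Tor-independent over $Z$ — automatic when $Z$ is a field and, more to the point, when one factor is $Z$-flat, which covers every situation in this paper), and the composite $Lp_X^*\Cot_{X/Z}\xrightarrow{\iota_X}\Cot_{W/Z}\to\Cot_{W/Y}\isom Lp_X^*\Cot_{X/Z}$ — the forgetful map followed by the base-change isomorphism for the other Cartesian square — is a retraction of $\iota_X$, so the triangle splits. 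I expect this last step to be the only one requiring real care: one has to check that the resulting summand inclusions really are $\iota_X$ and $\iota_Y$, which comes down to keeping the two base-change isomorphisms, the transitivity maps, and the identification $Lp_X^*Lf^*\isom Lh^*$ mutually compatible. None of this is deep, but it is where the ``diagram chase'' has content.
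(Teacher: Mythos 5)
Your argument is correct and is exactly the ``simple diagram chase based on the properties of the cotangent complex'' that the paper invokes without writing out: functoriality of the transitivity triangle along $p_X$ and $p_Y$ gives $K_{W/Z/S}\comp\iota_X=Lp_X^*K_{X/Z/S}$ and its twin, and the (Tor-independent) base-change splitting $\Cot_{W/Z}\isom Lp_X^*\Cot_{X/Z}\oplus Lp_Y^*\Cot_{Y/Z}$ converts these into the stated direct-sum identity. Your added care about Tor-independence (automatic here since $Z=\Spec(k)$ in all applications) and about checking that the summand inclusions really are $\iota_X,\iota_Y$ only makes the omitted proof more complete.
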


Equipped with the above description, we are ready to prove:

\begin{prop}\label{lem:product_deformation}
The morphism of deformation functors 
\[ \proddef_{X,Y} : \Def_X \times \Def_Y \to \Def_{X \times Y},\quad (\wt{X},\wt{Y}) \mapsto \wt{X} \times_{\Spec(A)} \wt{Y} \] 
is smooth (in particular levelwise surjective) if $H^1(X,\cO_X) = H^1(Y,\cO_Y) = 0$.
\end{prop}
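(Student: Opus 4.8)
The plan is to verify smoothness of $\proddef_{X,Y}$ directly using the infinitesimal lifting criterion for morphisms of deformation functors. Given a small extension $\eta : 0 \to I \to B \to A \to 0$ in $\Art_{W(k)}(k)$ with $\ideal{m}_B I = 0$, a pair $(\wt X_A, \wt Y_A) \in (\Def_X \times \Def_Y)(A)$, and a deformation $\wt W_B \in \Def_{X\times Y}(B)$ restricting over $A$ to $\wt X_A \times_A \wt Y_A$, I must produce a lift $(\wt X_B, \wt Y_B) \in (\Def_X\times\Def_Y)(B)$ over $(\wt X_A, \wt Y_A)$ whose product is $\wt W_B$. Unwinding, this amounts to two claims: first, that $\wt X_A$ and $\wt Y_A$ individually lift to $B$ (liftability), and second, that among all such lifts one can choose a compatible pair whose product is the given $\wt W_B$ (matching).

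For liftability, I would use the obstruction-theoretic description recalled just before the proposition: the obstruction to lifting $\wt X_A$ along $\eta$ lives in $\Ext^2(\Cot_{X/k}, \cO_X)\otimes_k I$ and is the Yoneda product of the Kodaira--Spencer class $K_{\wt X_A/A/\ZZ}$ with $\eta$. The key input is the Additivity of Kodaira--Spencer lemma: since $X\times Y \to \Spec k$ factors through both projections and $\Cot_{(X\times Y)/k} = p_X^*\Cot_{X/k}\oplus p_Y^*\Cot_{Y/k}$ (flat base change for the cotangent complex over the base $k$), the obstruction class of $\wt W_B$ decomposes as the direct sum of the pullbacks of the obstruction classes of $\wt X_A$ and $\wt Y_A$. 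Because $\wt W_B$ exists, its obstruction vanishes, hence so do both summands; by faithfully flat (indeed free) descent along the projections, the obstructions of $\wt X_A$ and $\wt Y_A$ vanish, so lifts $\wt X_B$, $\wt Y_B$ exist.

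For the matching step, I would compare the product $\wt X_B \times_B \wt Y_B$ with $\wt W_B$. Both are lifts of $\wt X_A\times_A\wt Y_A$ along $\eta$, hence differ by an element of the tangent space $\Ext^1(\Cot_{(X\times Y)/k},\cO_{X\times Y})\otimes_k I = \bigl(\Ext^1(\Cot_{X/k},\cO_X)\oplus \Ext^1(\Cot_{Y/k},\cO_Y) \oplus H^1(X,\cO_X)\otimes H^0(Y,\cO_Y) \oplus H^0(X,\cO_X)\otimes H^1(Y,\cO_Y)\bigr)\otimes_k I$, using the Künneth formula together with $\cExt^i(\Cot,\cO)$ local-to-global considerations. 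Here is where the hypothesis $H^1(X,\cO_X) = H^1(Y,\cO_Y) = 0$ enters: it kills the two ``mixed'' summands, so the difference class lies in $\Ext^1(\Cot_{X/k},\cO_X)\otimes_k I \oplus \Ext^1(\Cot_{Y/k},\cO_Y)\otimes_k I$, i.e. it is the direct sum of a deformation of $\wt X_B$ and a deformation of $\wt Y_B$ (over $\wt X_A$, $\wt Y_A$ respectively). Modifying $\wt X_B$ and $\wt Y_B$ by these classes and invoking additivity again (the product of the modified lifts differs from $\wt X_B\times_B\wt Y_B$ by exactly the sum of the two classes) produces a pair whose product is $\wt W_B$, as required.

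The main obstacle is the matching step, specifically making the Künneth decomposition of the tangent space $\Ext^1(\Cot_{(X\times Y)/k},\cO_{X\times Y})$ precise and checking that the identification is compatible with the torsor structure governing lifts along $\eta$ — i.e. that ``adding a first-order deformation to $\wt X_B$'' corresponds under the product map to ``adding its pullback to $\wt X_B\times_B\wt Y_B$.'' This last compatibility is exactly the first-order shadow of the Additivity of Kodaira--Spencer lemma and should follow by the same diagram chase; the cohomological bookkeeping (local-to-global spectral sequence plus Künneth, using that $\Cot_{X/k}$ is a perfect complex supported in degrees $\leq 0$ for $X$ smooth, so only $\cExt^0 = \cT_X$ contributes and $\Ext^1(\Cot_{X/k},\cO_X) = H^1(X,\cT_X)$) is routine but needs to be stated carefully for the argument to close.
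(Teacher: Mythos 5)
Your proof is essentially the paper's: the paper likewise decomposes the tangent and obstruction maps of $\proddef_{X,Y}$ using additivity of Kodaira--Spencer and the splitting $\Cot_{X\times Y/k}\simeq Lp_X^*\Cot_{X/k}\oplus Lp_Y^*\Cot_{Y/k}$, uses $H^1(X,\cO_X)=H^1(Y,\cO_Y)=0$ to kill the mixed K\"unneth contributions, and then concludes by the standard criterion (surjectivity on tangent spaces, injectivity on obstruction spaces, citing Fantechi--Manetti) rather than unwinding the infinitesimal lifting property by hand as you do. One small slip: the mixed summands of $\Ext^1(\Cot_{(X\times Y)/k},\cO_{X\times Y})$ are $H^0(X,\cT_X)\otimes H^1(Y,\cO_Y)$ and $H^1(X,\cO_X)\otimes H^0(Y,\cT_Y)$, not $H^1(X,\cO_X)\otimes H^0(Y,\cO_Y)\oplus H^0(X,\cO_X)\otimes H^1(Y,\cO_Y)$ --- harmless here, since the hypothesis kills them either way.
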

\begin{proof}
By the above general considerations and the additivity of Kodaira--Spencer class we see that the morphisms on tangent and obstruction space
\begin{align*}
\Tan_{\proddef_{X,Y}} : \Ext^1(\Cot_{X/k},\cO_X) \oplus \Ext^1(\Cot_{Y/k},\cO_Y) \to \Ext^1(Lp_X^*\Cot_{X/k} \oplus Lp_Y^*\Cot_{Y/k},\cO_{X \times Y} ), \\
\Ob_{\proddef_{X,Y}} : \Ext^2(\Cot_{X/k},\cO_X) \oplus \Ext^2(\Cot_{Y/k},\cO_Y) \to \Ext^2(Lp_X^*\Cot_{X/k} \oplus Lp_Y^*\Cot_{Y/k},\cO_{X \times Y} ), 
\end{align*} 
are given as direct sums of morphisms: 
\begin{align*}
\Ext^\bullet(\Cot_{X/k},\cO_X) \to \Ext^\bullet(\Cot_{X/k},Rp_{X*}\cO_{X \times Y}) \isom \Ext^\bullet(L{p_X}^*\Cot_{X/k},\cO_{X \times Y} ); \\
\Ext^\bullet(\Cot_{Y/k},\cO_Y) \to \Ext^\bullet(\Cot_{Y/k},Rp_{Y*}\cO_{X \times Y}) \isom \Ext^\bullet(Lp_Y^*\Cot_{Y/k},\cO_{X \times Y} ),
\end{align*}
which arise from the natural distinguished triangles
\begin{align*}
\cO_X \ra Rp_{X*}\cO_{X \times Y} \ra C_{p_X} \quad \text{ and } \quad \cO_Y \ra Rp_{Y*}\cO_{X \times Y} \ra C_{p_Y},
\end{align*}
induced by the structure morphisms $p_X^\#$ and $p_Y^\#$ of the projections.

By the assumptions and the spectral sequence:
\[
E^2_{ij} = \Ext^i(\Cot_{X/k},\cH^j(C_{p_X})) \Rightarrow \Ext^{i+j}(\Cot_{X/k},C_{p_X})
\] 
we see that $\Ext^1(\Cot_{X/k},C_{p_X}) = H^1(Y,\cO_Y) \otimes_k \Ext^0(L_{X/k},\cO_X) = 0$.  Analogously we obtain $\Ext^1(\Cot_{Y/k},C_{p_Y}) = 0$.  Therefore $\Tan_{\proddef_{X,Y}}$ is surjective and $\Ob_{\proddef_{X,Y}}$ is injective, which by \cite[Lemma 6.1]{fantechi_manetti} implies that $\proddef_{X,Y}$ is a smooth morphism of deformation functors.
\end{proof}

\subsection{Descending deformations along morphisms} 
\label{sec:descend}

One of our main tools is the following proposition, which one can prove along the same lines as \cite[Proposition 2.2]{liedtke_satriano}. See also \cite{cynk_van_straten} and \cite{wahl}, where this idea appeared previously.

\begin{prop} \label{lem:blow-up}
\begin{enumerate}
\item Let $f:Y \to X$ be a map satisfying $Rf_* \cO_Y = \cO_X$. Then there exists a natural transformation $\Def_Y\to \Def_X$. More generally, if $W=\{W_i\}_{i\in I}$ (resp. $Z=\{Z_i\}_{i\in I}$) is a family of closed subschemes of $Y$ (resp. $X$) parametrized by a preorder $I$ (cf. \S\ref{sec:notation}), and if $Rf_* \cO_{W_i} = \cO_{Z_i}$ (in particular, $Z_i=f(W_i)$), then there exists a natural transformation $\Def_{Y, W} \to \Def_{X, Z}$.  
\item Let $X$ be a smooth scheme, $Z\subseteq X$ a smooth closed subscheme of codimension $\geq 2$, $f:Y=\Bl_Z X\to X$ the blow-up of $X$ along $Z$, $E=\{E_j\}_{j\in J}$ the set of connected components of $f^{-1}(Z)$.  Then the forgetful transformation $\Def_{Y,  E}\to \Def_{Y}$ is an isomorphism (here the index set $J$ is given the trivial order). More generally, if $W=\{W_i\}_{i\in I}$ is a family of closed subschemes of $Y$, then the forgetful transformation $\Def_{Y, W\sqcup E}\to \Def_{Y, W}$ is an isomorphism. Here by $W\sqcup E$ we mean the family $\{W_i\}_{i\in I}\sqcup \{E_j\}_{j\in J}$ parametrized by $I\sqcup J$ with no nontrivial relations between $I$ and $J$.
\end{enumerate}
\end{prop}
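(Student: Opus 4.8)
The plan is to handle the two parts by different mechanisms: part (1) by constructing the deformation of $X$ directly as a ringed space, and part (2) by one cohomology vanishing feeding into a standard criterion for a morphism of deformation functors to be an isomorphism.

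For part (1), fix a flat deformation $\wt f\colon\wt Y\to\Spec A$ of $Y$ over $A\in\Art_{W(k)}(k)$. Since $Y\hookrightarrow\wt Y$ is a homeomorphism, $f$ induces a continuous map $\phi\colon|\wt Y|=|Y|\to|X|$, and I would set $\wt X\mydef(|X|,\phi_*\cO_{\wt Y})$. The substance is to show that $\wt X$ is a scheme, flat over $A$, with $\wt X\times_{\Spec A}\Spec k\isom X$. I would proceed by induction on the length of $A$: for a small extension $0\ra I\ra A'\ra A\ra 0$, tensoring the flat family $\cO_{\wt Y_{A'}}$ with $0\ra I\ra A'\ra A\ra 0$ gives $0\ra\cO_Y\otimes_k I\ra\cO_{\wt Y_{A'}}\ra\cO_{\wt Y_A}\ra 0$, and the hypothesis $Rf_*\cO_Y=\cO_X$ — equivalently $f_*\cO_Y=\cO_X$ and $R^if_*\cO_Y=0$ for $i>0$, hence also $H^1(f^{-1}U,\cO_Y)=0$ for affine $U\subseteq X$ — allows one to apply $\phi_*$, and equivalently $\Gamma(f^{-1}U,-)$, keeping the sequence exact. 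The local criterion of flatness then yields, by induction, that $\Gamma(f^{-1}U,\cO_{\wt Y})$ is $A$-flat with special fibre $\Gamma(U,\cO_X)$, and that the natural map $\wt X|_U\to\Spec\Gamma(f^{-1}U,\cO_{\wt Y})$, an isomorphism modulo $\ideal{m}_A$ between $A$-flat objects, is an isomorphism; gluing these affines produces the scheme $\wt X$, together with a map $\wt Y\to\wt X$ lifting $f$. Functoriality in $\wt Y$ is immediate, giving the natural transformation $\Def_Y\to\Def_X$. For the version with families, I would take $\wt Z_i\subseteq\wt X$ to be the closed subscheme with $\cO_{\wt Z_i}=\phi_*\cO_{\wt W_i}$; the vanishing $R^1f_*\cI_{W_i/Y}=0$ — read off from the long exact sequence of higher direct images of $0\ra\cI_{W_i/Y}\ra\cO_Y\ra\cO_{W_i}\ra 0$ together with $Rf_*\cO_{W_i}=\cO_{Z_i}$ and $R^1f_*\cO_Y=0$ — shows that $\cO_{\wt X}\to\phi_*\cO_{\wt W_i}$ is surjective and that $\wt Z_i$ is $A$-flat with special fibre $Z_i$, and inclusions $\wt W_i\subseteq\wt W_j$ push forward to inclusions $\wt Z_i\subseteq\wt Z_j$.

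For part (2), the geometric heart is the vanishing $H^i(E,\cN_{E/Y})=0$ for all $i$: on each connected component the blow-up restricts to a projective bundle $E_j\to Z_j$ of relative dimension $\operatorname{codim}(Z_j,X)-1\geq 1$ (this is where codimension $\geq 2$ is used), $\cN_{E_j/Y}=\cO_{E_j}(E_j)$ is the relative $\cO(-1)$, whose higher direct images all vanish because $H^\bullet(\PP^m,\cO(-1))=0$ for $m\geq 1$, and one concludes by Leray. Granting this, I would show that for every small extension $0\ra I\ra A'\ra A\ra 0$ the forgetful map
\[
\Def_{Y,W\sqcup E}(A')\ \ra\ \Def_{Y,W\sqcup E}(A)\times_{\Def_{Y,W}(A)}\Def_{Y,W}(A')
\]
is bijective. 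Indeed, extending an element of the target — a compatible system over $A'$ plus lifts over $A$ of the exceptional divisors $\wt E_j\subseteq\wt Y$ — to an element of the source is exactly the problem of lifting each Cartier divisor $\wt E_j$ to a Cartier divisor $\wt E_j'\subseteq\wt Y'$, whose obstruction lies in $H^1(E_j,\cN_{E_j/Y})\otimes_k I=0$ and whose set of solutions is a torsor under $H^0(E_j,\cN_{E_j/Y})\otimes_k I=0$, so the lift exists and is unique. A natural transformation of functors of Artin rings with this property whose source and target both send $k$ to a point is an isomorphism, by induction on the length of $A$; taking $W=\varnothing$ recovers $\Def_{Y,E}\isom\Def_Y$.

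The step I expect to be the main obstacle is the bookkeeping in part (1) — proving that $(|X|,\phi_*\cO_{\wt Y})$ really is an $A$-flat scheme locally of the form $\Spec\Gamma(f^{-1}U,\cO_{\wt Y})$, i.e. that $\phi_*$ commutes with the reductions it must commute with. There is no single slick identity here: it is an induction on the length of $A$ using properness of $f$ and $Rf_*\cO_Y=\cO_X$ at each step, the same mechanism as in \cite{wahl}, \cite{cynk_van_straten}, and \cite[Proposition 2.2]{liedtke_satriano}. Part (2) is by contrast essentially formal once $H^\bullet(E,\cN_{E/Y})=0$ is known; alternatively, since $Y$ and $E$ are smooth one can argue through the log tangent sheaf, using $0\ra\cT_Y(-\log E)\ra\cT_Y\ra\cN_{E/Y}\ra 0$ and the vanishing to see that $\Def_{Y,E}\to\Def_Y$ is bijective on tangent spaces and injective on obstruction spaces, hence smooth and therefore an isomorphism by \cite[Lemma 6.1]{fantechi_manetti}.
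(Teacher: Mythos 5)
Your argument is correct and follows exactly the route the paper itself indicates: the paper gives no proof of this proposition beyond pointing to \cite[Proposition 2.2]{liedtke_satriano} (and to \cite{wahl}, \cite{cynk_van_straten}), and your proof of part (1) by pushing forward the deformed structure sheaves under $Rf_*\cO_Y=\cO_X$, and of part (2) by the vanishing of $H^\bullet(E_j,\cN_{E_j/Y})$ for the relative $\cO(-1)$ on a projective bundle of positive fibre dimension, is precisely the standard mechanism those references use. The details you supply (the small-extension induction for flatness, the use of $R^1f_*\cI_{W_i/Y}=0$ for the embedded subschemes, and the fibre-product bijectivity criterion for part (2)) are sound.
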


As a simple corollary we obtain:

\begin{prop}\label{lem:lifting_graph}
Let $f : X \to Y$ be a morphism of schemes over a field $k$ satisfying $H^1(X,\cO_X) = H^1(Y,\cO_Y) = 0$.  If $\Bl_{\Gamma_f}(X \times Y)$ lifts to $A \in \Art_{W(k)}(k)$, then there exist $A$-liftings of $X$ and $Y$ together with a lifting of $f$.
\end{prop}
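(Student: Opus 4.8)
The plan is to extract the liftings of $X$ and $Y$ by chasing the diagram of deformation functors produced by \cref{lem:blow-up}, exactly as the abstract says. First I would set up the geometry: inside $X \times Y$ we have the graph $\Gamma_f \isom X$, its image $p_X(\Gamma_f) = X$ under the first projection, and its image $p_Y(\Gamma_f) = \mathrm{image}(f)$ under the second. Writing $B = \Bl_{\Gamma_f}(X\times Y)$ with blow-up map $\pi : B \to X\times Y$ and exceptional divisor $E = \pi^{-1}(\Gamma_f)$, we obtain the chain
\[
\Def_B \;\xleftarrow{\ \isom\ }\; \Def_{B, E} \;\xrightarrow{\ }\; \Def_{X\times Y,\ \Gamma_f} \;\xrightarrow{\ }\; \Def_{X, \{X\}} \times \Def_{Y, \{\mathrm{image}\ f\}}.
\]
The left isomorphism is \cref{lem:blow-up}(2) (here $\Gamma_f$ is smooth of codimension $\dim Y \geq 2$; if $\dim Y \leq 1$ the statement is trivial or vacuous since $Y$ is then a point or curve and lifts together with any map for separate elementary reasons, so we may assume the codimension hypothesis holds). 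The first forward arrow is the natural transformation of \cref{lem:blow-up}(1), applicable because $\pi_* \cO_B = \cO_{X\times Y}$ and $\pi_* \cO_E = \cO_{\Gamma_f}$ (blow-up of a smooth center in a smooth variety), so that embedded deformations of $E$ descend to embedded deformations of $\Gamma_f$.

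The second forward arrow is where the hypotheses $H^1(X,\cO_X) = H^1(Y,\cO_Y) = 0$ enter. Applying \cref{lem:product_deformation} to $X$ and $Y$ gives that every deformation of $X \times Y$ is a product $\wt X \times_{\Spec A} \wt Y$; but I want more, namely that the embedded deformation of $\Gamma_f$ inside it splits as a graph. The cleanest route is to observe that $\Gamma_f \isom X$ maps isomorphically onto its image under $p_X$, so restricting $p_X|_{\wt\Gamma}$ gives a lift $\wt X$ of $X$, and then $\Gamma_f \to Y$ (the restriction of $p_Y$, which is $f$ after the identification $\Gamma_f \isom X$) deforms to $\wt\Gamma \to \wt Y$ where $\wt Y$ is the second factor — using $H^1(Y,\cO_Y)=0$ to know $\wt Y$ is well-defined and $H^1(X,\cO_X) = 0$ to identify $\wt X$ with $\wt\Gamma$. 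Composing $\wt X \isom \wt\Gamma \hookrightarrow \wt X \times_{\Spec A} \wt Y \xrightarrow{p_{\wt Y}} \wt Y$ produces the desired lift $\wt f$ of $f$, and its graph is $\wt\Gamma$ by construction, so the data is consistent. Thus, given a lift of $B$, pull it back through the chain to obtain $(\wt X, \wt Y, \wt f)$.

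The main obstacle I anticipate is not any single step but the bookkeeping at the second forward arrow: one must be careful that the ``graph'' structure is genuinely recovered and not merely an abstract embedded deformation of $X$ into the product. Concretely, the subtlety is whether the closed immersion $\wt\Gamma \hookrightarrow \wt X \times_{\Spec A}\wt Y$ is still a section of $p_{\wt X}$ — equivalently whether $p_{\wt X}|_{\wt\Gamma}$ is an isomorphism. This follows because it is a lift of the isomorphism $p_X|_{\Gamma_f}$ and isomorphisms deform to isomorphisms (one can check this on the level of the ideal sheaf, or invoke that a finite morphism which is an isomorphism mod $\ideal m_A$ and flat on both sides is an isomorphism — both $\wt\Gamma$ and $\wt X$ are flat over $A$). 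Once that is in hand the rest is formal. I would also note that \cref{lem:blow-up}(1) as stated yields a transformation to $\Def_{X\times Y, Z}$ with $Z = \pi(E) = \Gamma_f$, which is exactly what we use, so no further massaging of index posets (as in the second-construction proof) is needed here.
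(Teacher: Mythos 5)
Your proposal is correct and follows essentially the same route as the paper: descend the deformation of the blow-up to an embedded deformation of $\Gamma_f$ in $X\times Y$ via \cref{lem:blow-up}, split the ambient deformation as $\wt X\times_{\Spec A}\wt Y$ via \cref{lem:product_deformation}, and recover $\wt f$ as $\wt p_Y\circ(\wt p_X|_{\wt\Gamma_f})^{-1}$ after noting that $\wt p_X|_{\wt\Gamma_f}$ is an isomorphism because its reduction to $k$ is. The paper's argument is just a terser version of yours, with the same key observation at the final step.
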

\begin{proof}
Assume $\Bl_{\Gamma_f}(X \times Y)$ lifts to $A$.  By \cref{lem:blow-up} there exists a deformation $\wt{X \times Y}$ of the product $X \times Y$ together with an embedded deformation $\wt{\Gamma_f}$ of $\Gamma_f$.  By \cref{lem:product_deformation} the $A$-scheme $\wt{X \times Y}$ is isomorphic to $\wt{X} \times_{\Spec(A)} \wt{Y}$ for some deformations of $X$ and $Y$.  The restriction of the projection $\wt{p}_X : \wt{X} \times_{\Spec(A)} \wt{Y} \to \wt{X}$ to $\wt{\Gamma_f}$ is an isomorphism (as its restricton to $\Spec(k)$ is an isomorphism) and therefore the tuple $(\wt{X},\wt{Y},\wt{p}_Y \comp (\wt{p}_X|_{\wt{\Gamma_f}})^{-1})$ gives the desired pair of liftings of $X$ and $Y$ together with a lifting of $f$. 
\end{proof}

\subsection{Regular sequences and flatness}

In the proof of Theorem~\ref{thm:config}, we need the following simple claim: if $R$ is a $3$-dimensional regular local $k$-algebra with residue field $k$, $L$ and $H$ a smooth curve and a smooth hypersurface in $X=\Spec\, R$ intersecting transversally at the closed point $P$, then any embedded deformation of $(X, L, H)$ induces a deformation of $P$ inside $L$ and $H$. This claim is implied by the following general results regarding deformations and regular sequences.   

\begin{lemma}\label{lem:nonzero}
Suppose $(A,\ideal{m}_A)$ is an element of $\Art_{W(k)}(k)$ and $R$ is a local $k$-algebra.  Moreover let $S$ be an $A$-flat local ring such that $S \otimes_A k = R$.  Then for any element $f \in S$ such that $\ol{f} \in R$ (we denote by $\ol{f}$ the image of $f$ under the natural map $S \to R$) is a non-zero divisor the following assertions hold true:

\begin{enumerate}
\item the element $f$ is a non-zero divisor in $S$,
\item the quotient ring $S/(f)$ is $A$-flat.
\end{enumerate}
\end{lemma}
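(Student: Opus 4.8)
The plan is to prove both assertions simultaneously by a careful application of the local criterion of flatness together with the long exact Tor sequence. First I would set up the multiplication-by-$f$ map $\mu_f : S \to S$ and consider its cokernel $S/(f)$. The key input is that $f$ reduces to a non-zero divisor $\ol{f}$ in $R = S \otimes_A k$, so tensoring $\mu_f$ with $k$ over $A$ gives an injection $R \xrightarrow{\ol{f}} R$ whose cokernel is $R/(\ol{f})$. Since $A$ is Artinian local with residue field $k$, flatness over $A$ is detected after base change to $k$: by the local criterion of flatness (in the form applicable to modules over Artinian local rings, cf. \cite[Théorème 10.2.1]{illusie_cotangent} or standard references), an $S$-module $M$ of finite type (or more generally one killed by a power of $\ideal{m}_A$-type arguments via the filtration by powers of $\ideal{m}_A$) is $A$-flat if and only if $\Tor_1^A(M, k) = 0$ and $M \otimes_A k$ behaves well; here the cleaner route is to induct on the length of $A$.

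The main step I would carry out is the following. Filter $A$ by powers of $\ideal{m}_A$ and reduce, via a sequence of small extensions $0 \to I \to A \to A' \to 0$ with $\ideal{m}_A I = 0$ and $I \isom k$, to the case where we already know the statement over $A'$. Write $S' = S \otimes_A A'$ and $f' \in S'$ the image of $f$; by induction $f'$ is a non-zero divisor in $S'$ and $S'/(f')$ is $A'$-flat. Now tensor the exact sequence $0 \to S \xrightarrow{\mu_f} S \to S/(f) \to 0$ — whose exactness on the left is exactly what we are trying to prove — against the sequence $0 \to I \otimes_A S \to S \to S' \to 0$. Using $A$-flatness of $S$ we get $I \otimes_A S \isom I \otimes_k R = R$ (as $I \isom k$), and a diagram chase in the resulting commutative diagram with the snake lemma shows: the kernel of $\mu_f$ on $S$ maps into the kernel of $\ol{f}$ on $R$, which is $0$, giving assertion (1); and then $\Tor_1^A(S/(f), k)$ sits in an exact sequence flanked by $\Tor_1^A(S, k) = 0$ and the kernel of $\ol{f}$ acting on $R$, hence vanishes, which by the local criterion gives assertion (2).

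The hard part, or rather the point requiring the most care, is bookkeeping the interaction between the two short exact sequences — the "Koszul" one for $f$ and the "deformation-theoretic" one for the thickening $A \to A'$ — and making sure the induction is correctly anchored: the base case $A = k$ is the hypothesis itself, and at each stage one must check that the module $S/(f)$ one is proving flatness of is the same as the one obtained by the inductive reduction (i.e.\ that $(S/(f)) \otimes_A A' = S'/(f')$, which follows once $\mu_f$ is known to stay injective after base change, creating a mild chicken-and-egg that the snake-lemma argument resolves in one stroke). I would also remark that one may instead invoke the equational criterion of flatness directly, but the inductive small-extension argument is the most transparent and matches the deformation-theoretic setting in which the lemma is applied.
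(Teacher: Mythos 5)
Your proposal is correct and follows essentially the same route as the paper: induction on the length of $A$ through small extensions $0\to I\to A\to A'\to 0$, using $A$-flatness of $S$ to identify $I\otimes_A S$ with $R$, and then the local criterion of flatness applied to $0\to S\xrightarrow{f} S\to S/(f)\to 0$ for the second assertion. The only (cosmetic) difference is that you package the inductive step for injectivity of $\mu_f$ as a snake-lemma chase, whereas the paper runs the same argument element-wise ($gf=0\Rightarrow [g]=0$ in $S'$, so $g=sg'$, then $\ol{g'}\ol{f}=0$ forces $g=0$).
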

\begin{proof}
The proof of the first claim follows by induction with respect to the length of $A$.  For the case $\len(A) = 1$ we know that $A = k$ and therefore the claim is clear.  For $\len(A) > 1$, we observe that $(A,\ideal{m}_A)$ is an extension of $(A',\ideal{m}_{A'}) \in \Art_{W(k)}(k)$ of $\len(A') = \len(A) - 1$ by a principal ideal $I = (s)$ satisfying $\ideal{m}_A I = 0$.  Now, take an element $g \in S$ such that $gf = 0$.  By the induction hypothesis applied for $A'$ the element $[f] \in S' \mydef S \otimes_A A'$ (we denote by $[g]$ the image of $g$ under the natural map $S \to S'$) is a non-zero divisor in $S'$ which implies that $[g] = 0$ and therefore there exists an element $g' \in S$ such that $g = sg'$.  Consequently from the relation $sg'f = 0$ we infer that $g'f \in \Ann(s)$ which by the $A$-flatness of $S$ means that $g'f \in \ideal{m}_A S$, that is $\ol{g'} \cdot \ol{f} = 0$.  By the induction hypothesis applied for $A = k$ we see that $g' \in \ideal{m}_A S$ which yields that $g \in \ideal{m}_A I \cdot S = (0)$.  This implies that $f$ is a non-zero divisor and thus proves the first part of the lemma. 

The proof of the second claim is a standard application of local criterion of flatness applied to an $A$-flat resolution:
\[
\xymatrix{
  0 \ar[r] & S \ar[r]^{f \cdot } & S \ar[r] & \ar[r] S/(f) \ar[r] & 0 }
\] implied by the first claim. 
\end{proof}

\begin{cor}\label{cor:reg_seq}
Let the rings $(A,\ideal{m}_A)$, $R$ and $S$ be as above.  Moreover, let $f_1,\ldots,f_k \in S$ for $k \geq 1$ be a sequence of elements such that their reductions $\ol{f_1},\ldots,\ol{f_k} \in R$ form a regular sequence in $R$.  Then $f_1,\ldots,f_k$ is a regular sequence in $S$ and $S/(f_1,\ldots,f_k)$ is an $A$-flat lifting of $R/(\ol{f_1},\ldots,\ol{f_k})$.
\end{cor}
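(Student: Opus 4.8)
The plan is to induct on $k$, with the case $k=1$ being exactly \cref{lem:nonzero}. First I would set up the inductive step: assume the statement holds for sequences of length $k-1$. Given $f_1,\ldots,f_k\in S$ whose reductions $\ol{f_1},\ldots,\ol{f_k}\in R$ form a regular sequence, I would apply the inductive hypothesis to $f_1,\ldots,f_{k-1}$. This tells me that $f_1,\ldots,f_{k-1}$ is a regular sequence in $S$ and that the quotient $\ol{S}\mydef S/(f_1,\ldots,f_{k-1})$ is an $A$-flat lifting of $\ol{R}\mydef R/(\ol{f_1},\ldots,\ol{f_{k-1}})$, i.e. $\ol{S}$ is $A$-flat with $\ol{S}\otimes_A k\isom \ol{R}$ (note $\ol{S}$ is again local, being a quotient of a local ring).

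Next I would apply \cref{lem:nonzero} to the triple $(A,\ol{R},\ol{S})$ and the single element $g\mydef f_k\bmod (f_1,\ldots,f_{k-1})\in\ol{S}$. Its reduction in $\ol{R}$ is $\ol{f_k}\bmod(\ol{f_1},\ldots,\ol{f_{k-1}})$, which is a non-zero-divisor precisely because $\ol{f_1},\ldots,\ol{f_k}$ is a regular sequence in $R$. Hence by \cref{lem:nonzero}(1), $g$ is a non-zero-divisor in $\ol{S}$, which says exactly that $f_k$ is a non-zero-divisor on $S/(f_1,\ldots,f_{k-1})$; combined with the inductive hypothesis, this shows $f_1,\ldots,f_k$ is a regular sequence in $S$. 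By \cref{lem:nonzero}(2), $\ol{S}/(g)=S/(f_1,\ldots,f_k)$ is $A$-flat, and it restricts to $\ol{R}/(\ol{g})=R/(\ol{f_1},\ldots,\ol{f_k})$ over $k$ since quotients commute with the base change $-\otimes_A k$. This completes the induction.

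I do not expect any genuine obstacle here: the corollary is a formal consequence of \cref{lem:nonzero} once one checks that the hypotheses of that lemma are stable under passing from $(A,R,S)$ to $(A,\ol{R},\ol{S})$. The only point requiring a moment's care is that $\ol{S}$ is still a \emph{local} ring and still $A$-flat with the correct fiber over $k$ — both of which are immediate, the flatness because a quotient of an $A$-flat module by the image of an $A$-flat submodule via a map that stays injective after $-\otimes_A k$ is $A$-flat (this is what \cref{lem:nonzero} already packages), and $\ol{S}\otimes_A k=\ol{R}$ because the $A$-flat resolution $0\to\ol{S}\xrightarrow{f_k}\ol{S}\to\ol{S}/(f_k)\to 0$ base-changes correctly. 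Thus the argument is routine.
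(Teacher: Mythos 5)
Your proposal is correct and follows exactly the paper's argument: the paper's proof is literally ``follows from \cref{lem:nonzero} by induction with respect to the parameter $k$,'' and your write-up supplies precisely the routine verification (that the triple $(A,\ol{R},\ol{S})$ again satisfies the hypotheses of \cref{lem:nonzero}) that the authors leave implicit. Nothing to add.
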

\begin{proof}
The proof follows from \cref{lem:nonzero} by induction with respect to the parameter $k$.
\end{proof}

\subsection{Blow-up formulas}\label{sec:cohomology}

In this section, we review formulas for the cohomology of the blow-up of a smooth proper scheme $X$ along a smooth subscheme $Z$, and deduce statements regarding Hodge--de Rham degeneration, ordinarity, and the Hodge--Witt property. It is best to deduce the blow-up formulas for different cohomology theories from a single motivic statement.

\begin{prop}[{cf. \cite[3.5.3]{voevodsky_motives}}] \label{prop:motivic-blow-up-formula}
Suppose that $X$ is a smooth proper scheme over a field $k$, $Z\subseteq X$ a smooth closed subscheme of codimension $c$. Then there is a decomposition of Chow motives
\[ 
    M(\Bl_Z X) = M(X) \oplus \bigoplus_{i=1}^{c-1} M(Z)(i)[2i]
\]
In particular, $[\Bl_Z X] = [X] + (\LL + \LL^2 + \ldots \LL^{c-1})[Z]$ in the Grothendieck ring of varieties, where $\LL = [\mathbb{A}^1_k]$.
\end{prop}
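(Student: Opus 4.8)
The plan is to establish the motivic decomposition $M(\Bl_Z X) = M(X) \oplus \bigoplus_{i=1}^{c-1} M(Z)(i)[2i]$ directly, and then obtain the Grothendieck ring identity by applying the motivic measure that sends a Chow motive to its class in $K_0(\mathrm{Var}_k)$ (sending the Tate twist $M(Z)(i)[2i]$ to $\LL^i [Z]$). The heart of the matter is thus the motivic statement, and here I would simply invoke the standard blow-up formula for Chow motives, as recorded in \cite[3.5.3]{voevodsky_motives} (see also Manin's original argument via correspondences). For completeness I would recall the shape of the argument: one writes down explicit algebraic cycles on $\Bl_Z X \times X$ and on $\Bl_Z X \times Z$ giving the projection and inclusion morphisms between the summands, and checks that they are mutually orthogonal idempotents summing to the identity. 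The key input is the description of the Chow groups of a blow-up: $\mathrm{CH}^*(\Bl_Z X) \cong \mathrm{CH}^*(X) \oplus \bigoplus_{i=1}^{c-1}\mathrm{CH}^{*-i}(Z)$, which follows from the fact that the exceptional divisor $E = \mathbb{P}(N_{Z/X})$ is a projective bundle over $Z$ together with the blow-up exact sequence relating $\mathrm{CH}^*(\Bl_Z X)$, $\mathrm{CH}^*(X)$, and $\mathrm{CH}^*(E)$.

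For the consequence in the Grothendieck ring: the class map $M \mapsto [M] \in K_0(\mathrm{Var}_k)$, defined on effective Chow motives so that $[M(W)] = [W]$ for smooth proper $W$ and $[M(1)[2]] = \LL$, is well-defined and additive on direct sums (it factors through the relation $[\PP^1] = 1 + \LL$ and the scissor relations via Bittner's presentation, or one can argue more directly that the decomposition of $\Bl_Z X$ into locally closed strata — the complement of $E$, which is $X \setminus Z$, together with the projective bundle $E \to Z$ — already yields $[\Bl_Z X] = [X\setminus Z] + [E] = [X] - [Z] + (1 + \LL + \cdots + \LL^{c-1})[Z] = [X] + (\LL + \cdots + \LL^{c-1})[Z]$). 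In fact this last stratification argument is entirely elementary and, in keeping with the paper's stated goal of aiming at non-experts, I would present it as the primary justification for the $K_0$-statement, treating the full motivic decomposition as the (stronger but more technical) refinement needed for the finer cohomological statements in \S\ref{sec:ordinarity}.

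The main obstacle, such as it is, is purely expository: deciding how much of the motivic machinery to reproduce versus cite. Since \cref{prop:motivic-blow-up-formula} is invoked verbatim from \cite[3.5.3]{voevodsky_motives}, the honest statement of the proof is that the first assertion is a citation, and only the passage to $K_0(\mathrm{Var}_k)$ requires (a few lines of) argument — namely the observation that the stratification $\Bl_Z X = (X\setminus Z) \sqcup \mathbb{P}(N_{Z/X})$ together with $[\mathbb{P}^{c-1}\text{-bundle}] = (1 + \LL + \cdots + \LL^{c-1})[Z]$ (which holds because $N_{Z/X}$ is a vector bundle and the class of a projective bundle in $K_0$ depends only on the rank, the bundle being Zariski-locally trivial) gives the formula after cancelling the $-[Z]$ against one copy of $[Z]$. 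No genuine difficulty arises; the content is the already-cited Chow-motivic decomposition.
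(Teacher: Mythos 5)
Your proposal matches the paper's treatment: the motivic decomposition is taken as a citation of \cite[3.5.3]{voevodsky_motives} (the paper gives no further proof), and the Grothendieck-ring identity follows from the elementary stratification $[\Bl_Z X]=[X\setminus Z]+[\PP(N_{Z/X})]=[X]-[Z]+(1+\LL+\cdots+\LL^{c-1})[Z]$, exactly as you present it. One small caution: there is no natural ``motivic measure'' from Chow motives back to $K_0(\mathrm{Var}_k)$ (the well-defined map goes in the opposite direction), but since you correctly designate the stratification argument as the primary justification, this does not affect the proof.
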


\begin{cor} \label{prop:blow-up-formula}
Suppose that $X$ is a smooth proper scheme over a field $k$, $Z\subseteq X$ a smooth closed subscheme of codimension $c$. Let $H^n$ denote one of the following families of functors of smooth projective varieties $X$:
\begin{enumerate}[(1)]
  \item $H^n(X\otimes \bar k, \mathbb{Z}_\ell)$ for some $\ell$ invertible in $k$, treated as a ${\rm Gal}(\bar k/k)$-module,
  \item (if $k$ is perfect of characteristic $p>0$) $H^n(X/W(k))$, the integral crystalline cohomology, a $W(k)$-module with a $\sigma$-linear endomorphism induced by the Frobenius,
  \item $H^n_{dR}(X) = H^n(X, \Omega^\bullet_{X/k})$, the algebraic de Rham cohomology, endowed with the Hodge filtration,
  \item $H^n_{Hdg}(X) = \bigoplus_{p+q=n} H^q(X, \Omega^p_{X/k})$, Hodge cohomology, a graded $k$-vector space,
  \item $H^n_{HW}(X) =  \bigoplus_{p+q=n} H^q(X, W\Omega^p_{X})$, Hodge--Witt cohomology, a graded $W(k)$-module endowed with $\sigma^{\pm 1}$-linear endmorphisms $F$ and $V$ satisfying $FV=p=VF$,
  \item (if $n$ is even) $A^{n/2}(X)$, the $\frac{n}{2}$-th Chow group of $X$ (treated as an abelian group).
\end{enumerate}
Moreover, let $-(n)$ denote the Tate twist, i.e., the tensor product with $H^2(\mathbb{P}^1)^{\otimes n}$ in the appropriate tensor category. Then there is a natural isomorphism of objects in the appropriate category as listed above
\[ 
  H^n(\Bl_{Z}(X)) = H^n(X) \oplus \bigoplus_{i=1}^{c-1} H^{n-2i}(Z)(i).
\]
\end{cor}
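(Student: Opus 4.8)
The plan is to deduce all six cases uniformly from the motivic decomposition of \cref{prop:motivic-blow-up-formula}. The key point is that each of the functors $H^n$ listed in $(1)$--$(6)$ factors through the category of Chow motives: that is, there is a realization functor from $\mathrm{Chow}(k)$ (with rational or integral coefficients as appropriate) to the target category, sending the Lefschetz motive $\LL$ to the Tate twist object $H^2(\PP^1)$, and sending $M(X)$ to $H^\bullet(X)$ in a way compatible with the grading shifts $[2i]$ and twists $(i)$. Granting this, one simply applies the realization functor to the isomorphism
\[
    M(\Bl_Z X) = M(X) \oplus \bigoplus_{i=1}^{c-1} M(Z)(i)[2i]
\]
and reads off the $n$-th graded piece, using that $M(Z)(i)[2i]$ contributes $H^{n-2i}(Z)(i)$ in degree $n$. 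So the proof reduces to pointing to the relevant representability statements in the literature for each cohomology theory.

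First I would handle the cases that are literally instances of classical Chow-motive realizations: case $(1)$, $\ell$-adic cohomology with its Galois action, and case $(6)$, Chow groups, are standard — the former is the $\ell$-adic realization of motives (the correspondence action on $H^*(-,\ZZ_\ell)$ is Galois-equivariant and compatible with twists), and the latter is immediate since $\mathrm{CH}^{n/2}$ of a motive is part of the very definition of the category. Case $(2)$, integral crystalline cohomology as an $F$-crystal, follows because crystalline cohomology is a Weil cohomology theory and correspondences act $\sigma$-linearly compatibly; one should cite the compatibility of the crystalline realization with the motivic decomposition (this is where the integral torsion-freeness statement of good property $(4)$ enters, via torsion-freeness of $H^*(Z/W)$ and $H^*(X/W)$). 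Cases $(3)$ and $(4)$, de Rham and Hodge cohomology with the Hodge filtration resp. grading, follow from the fact that algebraic correspondences act on de Rham cohomology preserving the filtration (hence on $\mathrm{gr}$, i.e. on Hodge cohomology), with $\LL$ acting as a shift by $(1,1)$ in bidegree; so the blow-up formula in these theories is again the image of the Chow-motivic identity. Case $(5)$, Hodge--Witt cohomology, is the one requiring the most care: one invokes that $W\Omega^\bullet_X$ is functorial for correspondences and that the slope/Hodge--Witt realization factors through motives (as in the work on the de Rham--Witt complex), with $F$ and $V$ compatible with the decomposition; I would cite Gros or Ekedahl for the action of algebraic cycles on Hodge--Witt cohomology.

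The main obstacle, and the only genuinely non-formal point, is the realization statement for Hodge--Witt cohomology in case $(5)$ with integral coefficients: unlike $\ell$-adic or crystalline cohomology, the Hodge--Witt groups $H^q(X, W\Omega^p_X)$ are not part of a classical Weil cohomology formalism, and one must check that the correspondence inducing the motivic projector $M(\Bl_Z X)\to M(X)\oplus\bigoplus M(Z)(i)[2i]$ acts on them compatibly with $F$ and $V$. The honest way to see this is to observe that the relevant correspondences here are geometric — the blow-up map $f$, the exceptional divisor inclusions, and the projective bundle structure on $E\to Z$ — so one can produce the Hodge--Witt decomposition by hand from the Leray/projective-bundle description of $Rf_* W\Omega^\bullet$, exactly as in the classical blow-up formula for de Rham--Witt cohomology; I would sketch that the decomposition comes from the $\PP^{c-1}$-bundle formula for $W\Omega^\bullet_E$ (giving the $W\Omega^\bullet_Z$-summands twisted by powers of the Chern class of $\cO(1)$, which carry the expected $F$, $V$) combined with the fact that $Rf_*W\Omega^\bullet_Y/W\Omega^\bullet_X$ is supported on $Z$ and computed by this bundle formula. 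All other cases are then genuinely routine applications of known realization functors, so I would state them briefly and concentrate the exposition on $(5)$.
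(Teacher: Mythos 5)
Your proposal is correct and follows essentially the same route as the paper: apply the motivic decomposition of \cref{prop:motivic-blow-up-formula} and observe that each listed theory carries an action of correspondences compatible with Tate twists, the only non-classical cases being Hodge and Hodge--Witt cohomology. The paper disposes of those two cases by citing Chatzistamatiou and R\"ulling \cite{rulling_chatzistamatiou} rather than Gros/Ekedahl or a hands-on de Rham--Witt projective-bundle argument, but this is a difference of reference, not of method.
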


\begin{proof}
This follows from \cref{prop:motivic-blow-up-formula} and the fact that the cohomology theories $H$ above all admit cycle class maps and actions by correspondences. For $\ell$-adic and crystalline cohomology this is well-known, and for Hodge and Hodge--Witt cohomology it follows from the work of Chatzistamatiou and R\"ulling \cite{rulling_chatzistamatiou}.
\end{proof}

\subsection{Hodge--de Rham degeneration, ordinarity, and the Hodge--Witt property} 
\label{sec:ordinarity}

Let $X$ be a smooth proper scheme over $k$. The first hypercohomology spectral sequence of the de Rham complex $\Omega^\bullet_{X/k}$,
\[ 
  E_1^{ij} = H^j(X, \Omega^i_{X/k}) \quad \Rightarrow \quad H^{i+j}_{dR}(X/k) := H^{i+j}(X, \Omega^\bullet_{X/k}),
\]
is called the \emph{Hodge--de Rham spectral sequence} of $X$. We say that it \emph{degenerates} if it degenerates on the first page, i.e., there are no nonzero differentials. As $X$ is proper, the cohomology groups are finite dimensional, and hence the degeneration is equivalent to the condition that
\begin{equation} \label{eqn:hdrdeg} \sum_n \dim H^n_{dR}(X/k) = \sum_{p,q} \dim H^q(X, \Omega^p_{X/k}).
\end{equation}
The Hodge--de Rham spectral sequence of $X$ degenerates if $k$ is of characteristic zero, or if $\dim X < p=\text{char}\,k$ and $X$ lifts to $W_2(k)$ \cite[Corollaire~2.4]{deligne_illusie}.

The scheme $X$ is called \emph{ordinary} (in the sense of Bloch and Kato) if it the Frobenius $F:H^q(X, W\Omega^p_X)\to H^q(X, W\Omega^p_X)$ on Hodge--Witt cohomology is bijective for all $p$ and $q$ (cf. \cite[Proposition~7.3]{bloch_kato}) for several equivalent criteria). It is called \emph{Hodge--Witt} if the Hodge--Witt groups $H^q(X, W\Omega^p_X)$ are finitely generated $W(k)$-modules. It follows from \cite[IV 4]{illusie_raynaud} that $X$ is Hodge--Witt if it is ordinary, and that $X\times Y$ is ordinary if $X$ and $Y$ are. 

\begin{cor}\label{lem:ordinary_hodge_witt}
Suppose that $X$ is a smooth proper scheme over a field $k$, $Z\subseteq X$ a smooth closed subscheme of codimension $>1$. Then
\begin{enumerate}
  \item The Hodge--de Rham spectral sequences of $Z$ and $X$ degenerate if and only if the Hodge--de Rham sequence of  $\Bl_{Z} X$ degenerates.
  \item The scheme $\Bl_Z(X)$ is ordinary (resp. Hodge--Witt) if and only if both $X$ and $Y$ are ordinary (resp. Hodge--Witt).
\end{enumerate}
\end{cor}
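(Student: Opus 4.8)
The plan is to deduce both assertions formally from the blow-up formulas of \cref{prop:blow-up-formula}, applied to de Rham cohomology with its Hodge filtration (case (3)), Hodge cohomology (case (4)), and Hodge--Witt cohomology (case (5)). Throughout write $c=\operatorname{codim}_X Z\geq 2$, so that $c-1\geq 1$; the point that makes the "if" directions work is that the cohomology of $Z$ genuinely occurs as a direct summand, at least once, on the right-hand side of each formula.

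For part (1), I would introduce the \emph{degeneration defect} of a smooth proper $k$-scheme $W$,
\[
  \delta(W)\;=\;\sum_{p,q}\dim_k H^q(W,\Omega^p_{W/k})\;-\;\sum_n\dim_k H^n_{dR}(W/k)\;\geq\;0,
\]
which is nonnegative by the Hodge--de Rham spectral sequence and vanishes precisely when that spectral sequence degenerates, by the reformulation \eqref{eqn:hdrdeg}. Taking dimensions in the blow-up formulas \cref{prop:blow-up-formula}(3) and (4) and summing over all (bi)degrees -- noting that the Tate twists $-(i)$ only multiply each relevant graded piece by the one-dimensional space $H^2_{dR}(\PP^1)$, resp.\ $H^1(\PP^1,\Omega^1_{\PP^1})$, and so do not change dimensions -- one obtains
\[
  \sum_n\dim_k H^n_{dR}(\Bl_Z X)=\sum_n\dim_k H^n_{dR}(X)+(c-1)\sum_m\dim_k H^m_{dR}(Z)
\]
together with the analogous identity for Hodge cohomology. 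Subtracting gives $\delta(\Bl_Z X)=\delta(X)+(c-1)\,\delta(Z)$, and since all three quantities are $\geq 0$ and $c-1\geq 1$, the left-hand side vanishes if and only if both $\delta(X)=0$ and $\delta(Z)=0$; this is exactly (1).

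For part (2), I would use the Hodge--Witt blow-up formula \cref{prop:blow-up-formula}(5) in its refined, bigraded form,
\[
  H^q(\Bl_Z X,\,W\Omega^p_{\Bl_Z X})\;\cong\;H^q(X,\,W\Omega^p_X)\;\oplus\;\bigoplus_{i=1}^{c-1}H^{q-i}(Z,\,W\Omega^{p-i}_Z)\otimes_{W(k)}H^1(\PP^1,W\Omega^1_{\PP^1})^{\otimes i},
\]
an isomorphism of $W(k)$-modules compatible with $F$ and $V$, which follows from the correspondence action of \cref{prop:blow-up-formula} together with the fact that the cycle-class correspondences respect the Hodge bigrading (cf.\ \cite{rulling_chatzistamatiou}). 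The Hodge--Witt property is then immediate: a finite direct sum of $W(k)$-modules is finitely generated iff each summand is, and $H^1(\PP^1,W\Omega^1_{\PP^1})^{\otimes i}\cong W(k)$, so the left-hand side is finitely generated for all $p,q$ iff the same holds for $X$ and (taking $i=1$, which is allowed since $c\geq 2$) for $Z$. For ordinarity, $F$ is bijective on $H^q(\Bl_Z X,W\Omega^p)$ for all $p,q$ iff it is bijective on each of the $F$-stable summands above; since $\PP^1$ is ordinary, $F$ acts invertibly on $H^1(\PP^1,W\Omega^1_{\PP^1})\cong W(k)$, so the tensor factor $H^1(\PP^1,W\Omega^1_{\PP^1})^{\otimes i}$ only multiplies $F$ by a unit and preserves bijectivity; hence $F$ is bijective on all Hodge--Witt groups of $\Bl_Z X$ iff it is on those of $X$ and (again using $i=1$) on those of $Z$, i.e.\ $\Bl_Z X$ is ordinary iff $X$ and $Z$ are. (This is of course consistent with the implication "ordinary $\Rightarrow$ Hodge--Witt" recalled from \cite{illusie_raynaud}.)

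Granting the blow-up formulas, the whole argument is bookkeeping of dimensions and direct summands, so I do not expect a serious obstacle; the only points needing a little care are (a) that the Hodge--Witt blow-up isomorphism may be taken compatibly with the Hodge bigrading and with the operators $F$ and $V$ -- which is what \cref{prop:blow-up-formula}(5) and the Chatzistamatiou--Rülling results provide -- and (b) the elementary observation that $F$ operates invertibly on the Hodge--Witt cohomology of $\PP^1$, so that Tate twisting does not disturb the ordinarity condition.
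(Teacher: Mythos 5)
Your argument is correct and is essentially the paper's own proof, worked out in detail: part (1) uses \cref{prop:blow-up-formula} for $H^n_{dR}$ and $H^n_{Hdg}$ together with the dimension criterion \eqref{eqn:hdrdeg}, and part (2) uses the Hodge--Witt blow-up formula together with the characterizations of ordinarity and the Hodge--Witt property via $F$-bijectivity and finite generation of the $H^q(-,W\Omega^p)$. The bookkeeping with the degeneration defect $\delta$ and the observation that Tate twisting by the ordinary $H^1(\PP^1,W\Omega^1_{\PP^1})$ does not disturb either condition are exactly the details the paper leaves implicit.
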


\begin{proof}
The first assertion follows from \cref{prop:blow-up-formula} for $H^n_{dR}$ and $H^n_{Hdg}$ and \eqref{eqn:hdrdeg}. For the latter, use \cref{prop:blow-up-formula}  for $H^n_{HW}$ and the characterizations given above.
\end{proof}

\bibliographystyle{amsalpha} 
\bibliography{graphFrobenius}

%
%


\end{document}